\documentclass[10pt,a4paper, xcolor]{article}
\usepackage{geometry,amsthm,graphics,amssymb,amsmath,latexsym}
\usepackage[all,2cell]{xy} \UseAllTwocells \SilentMatrices
\usepackage{amsfonts}
\usepackage{amssymb,amsthm, amsmath,graphicx,bm,ebezier,amsthm}
\usepackage{color,enumerate}
\usepackage{url}
\usepackage{hyperref,mathtools}
\usepackage{authblk}
\usepackage{caption}
\captionsetup[table]{labelsep=space}
\usepackage{listings}

\vfuzz10pt \hfuzz10pt

\newtheorem{theorem}{Theorem}
\newtheorem{definition}[theorem]{Definition}
\newtheorem{proposition}[theorem]{Proposition}
\newtheorem{corollary}[theorem]{Corollary}
\newtheorem{lemma}[theorem]{Lemma}

\theoremstyle{remark}

\newtheorem{example}[theorem]{Example}
\newtheorem{algorithm}[theorem]{Algorithm}
\newtheorem{remark}[theorem]{Remark}

\def\w{{\omega}}
\def\gcd{{\mathrm gcd}}

\def\N{\mathbb{N}}
\def\R{\mathbb{R}}
\def\Z{\mathbb{Z}}
\def\Q{\mathbb{Q}}
\def\d{\mathrm{d}}

\def\int{\mathrm{int}}

\newcommand{\D}{{\mathfrak D}}

\providecommand{\keywords}[1]{\smallskip\noindent{\small\emph{Keywords:} #1}}
\providecommand{\msc}[1]{\noindent{\small\emph{MSC-class:} #1}}

\title{On divisor-closed submonoids and  minimal distances in
finitely generated monoids}

\author[1]{J. I. Garc\'{\i}a-Garc\'{\i}a\thanks{Corresponding author. E-mail address: ignacio.garcia@uca.es.
Supported by
the project MTM2014-55367-P, which is funded by Ministerio de Economía y Competitividad and Fondo Europeo de Desarrollo Regional FEDER,
and by
FQM-366 (Junta de Andaluc\'{\i}a).}}

\author[$ $]{D. Mar\'{\i}n-Arag\'on\thanks{
Email address: daniel.marinaragon@alum.uca.es.
%Under the supervision of the other two authors in the
%program ``Master Interuniversitario en Matemáticas'' at
%Universidad de Cádiz.
}}

\author[3]{M. A. Moreno-Fr\'{\i}as
\thanks{E-mail address: mariangeles.moreno@uca.es.
Supported by
the project MTM2014-55367-P, which is funded by Ministerio de Economía y Competitividad and Fondo Europeo de Desarrollo Regional FEDER,
and by
 FQM-298 (Junta de Andaluc\'{\i}a).}}
\affil[1,3]{Departamento de Matem\'aticas, Universidad de C\'adiz}

\sloppy
\date{}

\begin{document}
%\setpagewiselinenumbers
%\modulolinenumbers[5]
%\linenumbers

\maketitle

\begin{abstract}
\noindent
We study the lattice of divisor-closed submonoids of finitely generated cancellative
commutative monoids. In case the monoid is an affine semigroup,
we give  a geometrical characterization of such submonoids in terms of its cone.
Finally, we use our results to give an algorithm for computing $\Delta^*(H)$, the set
of minimal distances of $H$.

\keywords{divisor-closed submonoid,  set of minimal distances,
non-unique factorizations, commutative monoid, cancellative monoid,
polyhedral cone, finitely generated monoid.}

\msc{
13A05, %(Divisibility; factorizations),
11R27, %(Units and factorization),
20M14, %(Commutative semigroups),
%20M05(Free semigroups, generators and relations, word problems),
%11B30(Arithmetic combinatorics; higher degree uniformity),
20M13, %(Arithmetic theory of monoids),
52B11. %($n$-dimensional polytopes)
}
%{\bf --- mirar los más apropiados ---}

\end{abstract}

\section{Introduction}
%All monoids appearing in this work are commutative. For this reason, in the
%sequel we omit this adjective.

All semigroups, monoids and groups appearing in this paper are commutative and thus in the sequel we omit this adjective when we refer to one of these
objects. We denote by $\Z$, $\N$, and $\Q$ the set of integers,  nonnegative integers, and rationals, 
respectively.

Non-unique Factorization Theory was initiated in 1990s, it has its
origins in algebraic number theory (see \cite{Narkiewicz}), and its most important goal is to classify
non-uniqueness of factorization occurring for different commutative monoids and domains.
Since then,
the study of 
%combinatorial properties and invariants associated to non-unique 
factorizations in integral
domains and monoids has become an active area of interest, and, 
in particularly, a class that  plays a central role in factorization theory are Krull monoids (see \cite{grillet}).
In \cite{Geroldinger-Halter}, 
we can find a large amount of results concerning properties of factorizations, and a detailed study of Krull monoids. In that work, 
parameters and properties are defined and studied mainly from a theoretical perspective.
%; so, it seems that the obtention of algorithms are not in the aims of that work. 
One paper that applies algorithmic methods, such as the found in \cite{Rosales3}, for the study of finitely generated commutative monoids to compute parameters of Non-unique Factorization Theory is \cite{atomicos};  
%That work uses algorithmic methods for the study of finitely generated commutative monoids to compute some factorization properties and parameters; 
it is remarkable that the algorithms given are illustrated with examples of monoids where these parameters are computed.
Not only \cite{atomicos} is in this line of study, 
%we also find the works \cite{elasticidad}, \cite{w-primalidad}, \cite{delta-set}, and  
for instance, in \cite{elasticidad}, it is given a algorithm to compute the elasticity of a monoid,
the catenary degree is computed in \cite{oneill}, the tame degree in \cite{chapman1}, 
the $\omega$-primality in \cite{chapman2} and \cite{w-primalidad}, and the
$\Delta$-set  of a monoid in \cite{delta-set}.

%In the work \cite[Proposition 2]{Kainrath-Lettl}, it is proved that every finitely generated Krull monoid with divisor class group equal to $\Z^k$ with $k\in\N$ is isomophic to the set of non-negative solutions of a homogeneus system of integral lineal equations. 
This work 
follows the above-mentioned line, and it 
is motivated by the study of the 
set of minimal distances of a Krull monoid $H$ with finite class group $G$. %, denoted by $\Delta^*(H)$, 
This parameter, denoted by $\Delta^*(H)$, was first investigated in \cite{Geroldinger}, and,
among other works, it is also 
studied in \cite{Chapman-Chang}, \cite{Chapman-Schmid-Smith},
\cite{Gao-Geroldinger}, \cite{Geroldinger-Hamidoune}, and \cite{Geroldinger-Qinghai}.
A result of \cite{Chapman-Chang} allows us to determine the minimal
distance of certain Krull monoids with cyclic class group.
In \cite{Chapman-Schmid-Smith}, the set of minimal distances of $H$
is studied for Krull monoids with infinite class group.
It is well-known (see \cite{Geroldinger-Halter}) that
if a Krull monoid $H$ has class group $G$ with $G$ finite, then there is a constant
$M\in \N$ such that all the sets of lengths are almost arithmetical multiprogressions
with bound $M$ and difference $d\in  \Delta^*(H)$.
Recently, in \cite{Geroldinger-Qinghai},
it is proved  that
$\max\Delta^*(H)\leq \{{\rm exp}(G)-2,{\rm r}(G)-1\}$ with ${\rm exp}(G)$ and ${\rm r}(G)$ as defined in 
\cite[Appendix A]{Geroldinger-Halter} and the equality holds if every class of
$G$ contains a prime divisor.
%The definition of the set of minimal distances
%is 
%$$
%\Delta^*(H)=\{\min(\Delta(S))| S\subset H \mbox{ is a divisor-closed %submonoid with }\Delta(S)\neq 0\}.
%$$
Usually, this parameter is usually defined for Krull monoids, but
in \cite{Kainrath-Lettl}  the monoid of
non-negative solutions of systems of linear Diophantine equations
was characterized
as being the same as
Krull monoids with finitely generated (torsion free) quotient group. Also, using the chapter by
Chapman and Geroldinger in \cite{Anderson}, one can show that the sets of factorization
lengths in a Krull domain with finite divisor class group are identical to those
in a block monoid over a finite Abelian group, which is by definition a finitely
generated reduced cancellative monoid (and thus strongly reduced).
This fact, for example,  is used in \cite{elasticidad} to calculate the elasticity in
full affine semigroups and the elasticity in Diophantine monoids.
Besides, in \cite[Corollary 16]{atomicos}, it is proved that every
finitely generated cancellative monoid
is atomic, and
for this reason it is possible to translate the concept of divisor-closed submonoids to
finitely generated cancellative monoid and study the computation of the set of
minimal distances in  these monoids.

The aim of this work is to provide a method for computing the
set of minimal distances
of a finitely generated cancellative monoid
by studying the structure of its set of divisor-closed submonoids.
Using the Archimedean components of the monoid, we show how to obtain these submonoids,
and, in Theorem \ref{componentes}, we prove that the set of divisor-closed submonoids has a structure
of complete finite lattice.
For finitely generated submonoids of
$\N^n$ (also known as affine semigroups), we describe geometrically their
set of divisor-closed submonoids,
we use this description to obtain a
system of generators of every divisor-closed submonoid, and we give a method
for computing their  sets of minimal distances. %$\Delta^*$-sets.
The method to obtain the divisor-closed submonoids of an affine semigroup
uses the polyhedral cone generated by the
affine semigroup  and requires the use of tools of
linear integer programming and topology.
The main result of this work is Theorem \ref{carash}, and, in this result,
we prove that every divisor-closed submonoid of
an affine semigroup
$H$ is equal to the intersection of a face of the cone generated by $H$ with $H$ and that
every divisor-closed submonoid of $H$ is of this form.
In others words, it proves that for every affine semigroup $H$ its lattice of  Archimedean components, its
lattice of divisor-closed submonoids and the lattice of faces of the polyhedral cone generated
by $H$ are isomorphic.
Another interesting result is Corollary \ref{checkdc} which characterizes the divisor-closed submonoids of a cancellative monoid $\tilde H$ in term of the divisor-closed submonoids of a associated affine monoid $H$.
So, we have two different methods to calculate
the set of divisor-closed submonoids of a finitely generated cancellative monoid and
thus for computing its set of minimal distances. These methods have been illustrated in this work with several examples, and the software used have been \cite{normaliz} and \cite{ecuacionesZpython}.

We follow our introduction with a short summary which outlines some basic definitions concerning to monoids.
%The content of this work is organized as follows. In Section 2, we provide the reader with background related to monoids. 
In Section 3, the definition of divisor-closed monoid is presented and we express it as union of its Archimedean components. We turn our attention in Section 4 to affine semigroups, and we use geometrical tools for giving the lattice of divisor-closed submonoids. In Section 5, we compute the set of divisor-closed submonoids of a reduced monoid using the results of Section 4. Finally, in Section 6, we provide an algorithm for computing the set of minimal distance of finitely generated cancellative monoids.

\section{Notation and Definitions}
Factorization-theoretic notions are usually defined for multiplicative monoids, but we use additive notation for our aim.

The following definitions and results are in \cite{Rosales3}.
A  semigroup is a pair $(H,+)$, with $H$ a non-empty set and $+$ a binary operation defined on $H$ verifying the associative law. 
In addition, if there exists an element, which is usually denoted by $0$, in $H$ such that $a+0=0+a$ for all $a\in H$, we say that $(H,+)$ is a monoid.
%In this work, all semigroups and  monoids are commutative.
Given a subset $A$ of a monoid $H$, the monoid generated by $A$, denoted by $\langle A\rangle$, is the least (with respect to inclusion) submonoid of $H$ containing $A$. When $H=\langle A\rangle $, we say that $H$ is generated by $A$ or that $A$ is a system of generators of $H$. The monoid $H$ is finitely generated if it has a finite system of generators. 
Recall that finitely generated submonoids of $\N^n$ are known as affine semigroups.
% and the submonoids $S$ of $\N$ verifying that $\N\setminus S$ is finite are called numerical semigroups.

Given a  monoid $H$ we define the following binary relation over it:
\[b\leq_H a \textrm{ if } a=b+c \textrm{ for some }c\in H.\]
Clearly $\leq_H$ is reflexive and transitive, and if $b\leq_H a$, then
$b+c\leq_H a+c$ for all $c\in H$. 
%As usual, we write $b\geq_H a$ whenever $a\leq_H b$.
Dealing with multiplicative written monoids, this notion is the same as the notion of divisivility, so
we say that $b$ divides $a$ if and only if $b\leq_H a$.

Define the binary relation $\mathcal N$ on $H$ by
\[ a \mathcal{N} b \textrm{ if there exist } k \textrm{ and } l\in\N\setminus\{0\} \textrm{ such that } ka\geq_H b \textrm{ and } lb\geq_H a.\]
It is easy to check that $\mathcal N$ is a congruence (a equivalence binary relation compatible with the addition) and that
if $H$ is finitely generated then $H/\mathcal N$ is finite. The elements of
$H/\mathcal N$ are known as the Archimedean components of $H$, and they are subsemigroups
of $H$. Every finitely generated monoid can be viewed as a finite lattice of its Archimedean components, and 
an ordering in $H/\mathcal N$ can be defined as follows: $S_1\leq S_2$ if and only if
for every $a\in S_1$ and every $b\in S_2$ the element $a+b$ belongs to $S_2$.
A monoid $H$ is Archimedean whenever it has only two Archimedean components, and, in such case, their components are $\{0\}$ and $H\setminus\{0\}$.

A  monoid $H$ is cancellative if whenever the equality $a+c=b+c$ holds for some
$a,b,c\in H$, then $a=b$.
In the sequel, we assume that all monoids appearing are  cancellative.
%Since all monoids appearing in this work are  and cancellative,
%in the sequel we omit these adjectives.
If $H$ is a finitely generated monoid, then
$H$ is isomorphic to $\N^p/\sim_M$ for some positive integer $p$ and some subgroup
$M$ of $\Z^p$, where $\sim_M$ is the congruence defined by $a\sim_M b$ if and only if $a-b\in M$.
The subgroups of $\Z^p$ are always determined by a set of defining equations, that is, if  $M$ is a subgroup of $\Z^p$ (denoted by $M\leq \Z^p$), there exists a set of equations of the form
\[
\begin{array}{lcl}
a_{11}x_1+\dots+a_{1p}x_p & \equiv & 0 \mod d_1,\\
 & \vdots & \\
a_{r1}x_1+\dots+a_{rp}x_p & \equiv & 0 \mod d_r,\\
a_{(r+1)1}x_1+\dots+a_{(r+1)p}x_p & = & 0,\\
 & \vdots & \\
a_{(r+k)1}x_1+\dots+a_{(r+k)p}x_p & = & 0\\
\end{array}
\]
such that an element is in $M$ if and only if it verifies the above equations.
By \cite[Proposition 3.1]{Rosales3}, if $H\cong \N^p/\sim_M$, then $H$ is
isomorphic to the submonoid of $\Z_{d_1}\times\dots\times\Z_{d_r}\times \Z^k$ generated by
\[
\begin{multlined}
\{  ([a_{11}]_{d_1},\dots,[a_{r1}]_{d_r},a_{(r+1)1},\dots,a_{(r+k)1}),\dots,\\
([a_{1p}]_{d_1},\dots,[a_{rp}]_{d_r},a_{(r+1)p},\dots,a_{(r+k)p})\}
\end{multlined}
\]
where $[a]_d$ denotes the equivalence class of $a$ in the finite Abelian group $\Z_d$.
A monoid is called reduced whenever it does not have nontrivial units. In \cite{Rosales3}, it proved that $\N^p/\sim_M$ is reduced if and only if $M\cap \N^p=\{0\}$, and this occurs if there exist $b_1,\dots,b_p\in\N\setminus \{0\}$ such that every element $x$ of $M$ satisfies the homogeneous equation $b_1x_1+\dots +b_p x_p=0$.

Given two monoids $H_1$ and $H_2$ a map $f$ from $H_1$ to $H_2$ is a monoid morphism
if $f(0)=0$ and $f(a+b)=f(a)+f(b)$ for all $a,b\in H_1$.

\section{Divisor-closed submonoids}

A submonoid $S$ of $H$ is called a
divisor-closed submonoid of $H$ if $a\in S$, $b\in H$, and $b$ divides $a$ imply that
$b\in S$ (see \cite{Geroldinger-Qinghai}).
Trivially, the submonoids of $H$, $\{0\}$ and $H$, are
divisor-closed submonoids of $H$.
With the additive notation, this notion translates 
%We can rewrite the above definition of divisor-closed submonoid
as follows:
a submonoid $S$ of $H$ is called divisor-closed if $b+c\in S$ implies  $b,c\in S$. 
One of our goal is to compute the set of divisor-closed submonoids of a given  finitely generated cancellative  monoid $H$; this set of submonoids is denoted by $\D (H)$.

\begin{example} Given $H=\langle  5,7 \rangle$, the submonoid $S=\langle
7 \rangle$ is not divisor-closed because $35=5+5+5+5+5+5+5\in S$, but
$5$ divides $35$ and $5 \notin  S$.
\end{example}

\begin{example}
Consider $S$ the submonoid of $\N^2$ generated by $(1,0)$. This submonoid is trivially
a divisor-closed submonoid of $\N^2$. In contrast, $S'=\langle (5,0)\rangle$ is not
divisor-closed, since $(5,0)=(2,0)+(3,0)$ and $(2,0)\not\in S'$ but $(5,0)\in S'$.
\end{example}

We prove now that if $H$ is finitely generated, then all every divisor-closed submonoid of $H$ is  generated by a subset of the system of generators of $H$.

\begin{proposition}\label{dcfg}
Let $H$ be a finitely generated  monoid with $G=\{g_1,\dots,g_p\}$
one of its system of generators.
Then, every divisor-closed submonoid of $H$ is finitely generated and
has a system of generators contained in $G$.
\end{proposition}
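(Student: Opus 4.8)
The plan is to show that a divisor-closed submonoid $S$ of $H$ is generated by exactly those generators of $H$ that happen to lie in $S$. First I would set $G_S := G \cap S = \{g_i \in G : g_i \in S\}$ and argue that $S = \langle G_S \rangle$. One inclusion is immediate: $G_S \subseteq S$ and $S$ is a submonoid, so $\langle G_S \rangle \subseteq S$. For the reverse inclusion, take an arbitrary $s \in S$. Since $G$ generates $H$ and $s \in H$, we may write $s = \sum_{i=1}^{p} \lambda_i g_i$ with $\lambda_i \in \N$. I want to conclude that every $g_i$ appearing with $\lambda_i > 0$ actually lies in $S$; then $s \in \langle G_S \rangle$ and we are done.

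The key step is the following observation about divisor-closedness: if $b + c \in S$ then $b, c \in S$ (this is exactly the additive reformulation of the definition given in the text). Applying this inductively to the expression $s = \sum_i \lambda_i g_i$, we peel off one summand at a time. Concretely, if $\lambda_j > 0$ for some index $j$, write $s = g_j + (s - g_j)$ where $s - g_j = (\lambda_j - 1) g_j + \sum_{i \neq j} \lambda_i g_i \in H$; divisor-closedness of $S$ forces $g_j \in S$ (and also $s - g_j \in S$). Hence $g_j \in G_S$ for every $j$ in the support of the expansion, so $s \in \langle G_S \rangle$. This proves $S = \langle G_S \rangle$, and since $G_S \subseteq G$ is finite, $S$ is finitely generated with a system of generators contained in $G$.

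I do not expect a genuine obstacle here; the statement is essentially a direct unwinding of the definition. The only point requiring a little care is making the "peel off one summand at a time" argument rigorous — one should phrase it as an induction on $\sum_i \lambda_i$, the total number of generator-occurrences in a chosen expansion of $s$, rather than waving at it informally. One subtlety worth noting: a given $s$ may have several expansions over $G$ (the monoid need not be free), but this is harmless — we only need \emph{one} expansion of $s$, and the argument shows each generator in \emph{that} expansion lies in $S$, which already places $s$ in $\langle G_S\rangle$. No cancellativity is needed for this particular proposition, though it is a standing assumption elsewhere in the paper.
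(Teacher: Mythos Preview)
Your proof is correct and follows essentially the same approach as the paper's: both define $G_S$ as the generators of $H$ lying in $S$, and use divisor-closedness to show that every generator appearing in an expansion of some $s\in S$ must itself lie in $S$, yielding $S=\langle G_S\rangle$. Your version is slightly more explicit about the peeling-off step and the induction on $\sum_i \lambda_i$, but the argument is the same.
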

\begin{proof}
Let $S$ be a divisor-closed submonoid of $H$.
Every element $s\in S$ can be expressed as $\sum_{i=1}^p \lambda^s_i g_i$ for some
$\lambda^s_i\in \N$. Since $S$ is divisor-closed, we obtain that if for some
$s\in S$ the coefficient $\lambda^s_i$ is not null, then $\lambda^s_i g_i\in S$, and
therefore $g_i\in S$. So, we have that
\[G_S=\{g_i\mid \textrm{ there exists }s\in S \textrm{ such that }\lambda^s_i\neq 0 \}
\subset S, \]
and thus, $\langle G_S\rangle \subseteq S$.
Trivially
$S\subseteq \langle G_S\rangle$, and therefore $S=\langle G_S\rangle $.
\end{proof}

From the above result we obtain that every finitely generated monoid $H$ has only a finite number of divisor-closed submonoids. For instance, 
if $H=\langle g_1,g_2,g_3\rangle$ is a monoid, the lattice of candidates
to be divisor-closed submonoids of $H$ are represented in this figure:
\[\xymatrix{
 &  H  & & \\
\langle g_1, g_2 \rangle \ar[ur]&    & & \langle g_2,g_3 \rangle\ar[llu]\\
  & \langle g_1,g_3 \rangle \ar[uu]& \langle g_2 \rangle \ar[llu]\ar[ur]& \\
\langle g_1 \rangle \ar[uu] \ar[ur]&    & & \langle g_3 \rangle \ar[uu]\ar[llu]\\
 & &\{0\} \ar[llu]\ar[uu] \ar[ur]& \\
}\]
%The problem is that 
But, not every submonoid generated by a subset of the
system of generators of $H$ is divisor-closed.
Take
$H=\langle (3,0),(0,3),(2,2)\rangle \subseteq \N^2$ and $S=\langle (2,2) \rangle$; 
since the element $(6,6)$ is in $S$, but
$2(3,0)+2(0,3)=(6,6)$ and  $2(0,3)\notin S$,
 $S$ is not divisor-closed.
Thus, we need a method
to check
whether such a submonoid $S$ is or is not divisor-closed.

Adding the zero element to an  Archimedean component of a monoid $H$ we obtain a submonoid of $H$.
These submonoids are not necessarily finitely generated; therefore, in general,
they are not divisor-closed.
%Let consider $H$ be the monoid $\N^2$. Its 
For example, consider $H=\N^2$. 
Its Archimedean components are: the subsemigroups
$C=\{(x_1,x_2)| x_1\geq 1 \textrm{ and } x_2\geq 1\}$, 
%The others non-zero Archimedean components of $\N^2$ are
$C_1=\{(x,0)|x\geq 1\}$, $C_2=\{(0,x)|x\geq 1\}$, and $\{(0,0)\}$.
Adding the zero element to $C$, $C_1$, and $C_2$, we obtain the submonoids: $C\cup\{(0,0)\}$, $C_1\cup\{(0,0)\}$ and $C_2\cup\{(0,0)\}$.
The submonoids $C_1\cup\{(0,0)\}$ and $C_2\cup\{(0,0)\}$ are finitely generated, but
$C\cup\{(0,0)\}$ is not finitely generated. Therefore, $C\cup\{(0,0)\}$ is not a divisor-closed submonoid of $\N^2$.

We see now 
%The following result gives us 
the relation between the Archimedean components of a
finitely generated monoid and its divisor-closed submonoids.

\begin{theorem}\label{componentes}
Every divisor-closed submonoid $S$ of a finitely generated monoid $H$ can be expressed as a
union of Archimedean components of $H$.
Furthermore, there exists an Archimedean component $\hat S$ such that
$S=\cup \{S'| S' \textrm{ is an Archimedean component of }H\textrm{ and }S'\leq \hat S\}$.
%with $\leq$ the ordering in the lattice of Archimedean components of $H$.
\end{theorem}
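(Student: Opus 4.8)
The plan is to separate the statement into its two assertions: that $S$ is $\mathcal N$-saturated (a union of Archimedean components), and that among the components contained in $S$ there is a largest one for the order on $H/\mathcal N$. The first is a one-line divisibility argument; the second is where the actual work lies, since a finite poset need not possess a maximum in an arbitrary subset, so one must exhibit the witness $\hat S$ by hand.

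For the first assertion I would argue as follows. Let $a\in S$ and let $b\in H$ with $a\,\mathcal N\,b$. By the definition of $\mathcal N$ there are $k\in\N\setminus\{0\}$ and $c\in H$ with $ka=b+c$. Since $a\in S$ and $S$ is a submonoid, $ka\in S$; since $S$ is divisor-closed and $ka=b+c$, we get $b\in S$. Hence the whole Archimedean component of $a$ lies in $S$, and therefore $S=\bigcup\{\,S'\mid S' \text{ is an Archimedean component of } H \text{ with } S'\subseteq S\,\}$.

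For the second assertion, by Proposition \ref{dcfg} I would fix a finite system of generators $h_1,\dots,h_m$ of $S$ (contained in $G$), set $s:=h_1+\dots+h_m\in S$, and let $\hat S:=[s]_{\mathcal N}$ be its Archimedean component. Since $s\in S$, the first part already gives $\hat S\subseteq S$. The key claim is that for an Archimedean component $S'$ of $H$ one has $S'\subseteq S$ if and only if $S'\leq\hat S$; combined with the displayed equality this finishes the proof. The implication ``$S'\leq\hat S\Rightarrow S'\subseteq S$'' is immediate: for $a\in S'$ and any $b\in\hat S\subseteq S$ we get $a+b\in\hat S\subseteq S$ by the definition of $\leq$, and divisor-closedness of $S$ yields $a\in S$. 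For the converse, assume $S'\subseteq S$ and take $a\in S'$, $b\in\hat S$; I must verify $(a+b)\,\mathcal N\,s$. Writing $a=\sum_{i=1}^m\lambda_i h_i$ with $\lambda_i\in\N$ (possible since $a\in S$) and using $h_i\leq_H s$ for each $i$ (because $s=h_i+\sum_{j\neq i}h_j$), one gets $a\leq_H\mu s$ with $\mu:=\sum_i\lambda_i$; choosing $k,l\in\N\setminus\{0\}$ with $kb\geq_H s$ and $ls\geq_H b$ (these exist because $b\,\mathcal N\,s$), one has $k(a+b)\geq_H kb\geq_H s$ and $(\mu+l)s=\mu s+ls\geq_H a+b$, so $(a+b)\,\mathcal N\,s$, i.e. $a+b\in\hat S$, and hence $S'\leq\hat S$.

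The main obstacle is precisely the construction of $\hat S$: one must realize that the class of the sum of a generating set of $S$ dominates, in the order of $H/\mathcal N$, every component sitting inside $S$. The remaining technical points — the monotonicity facts $x\leq_H x+y$ and $h_i\leq_H s$, and the compatibility of $\leq_H$ with addition used to pass from $h_i\leq_H s$ to $a\leq_H\mu s$ — are routine from the definitions, so I would keep them brief.
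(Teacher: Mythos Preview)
Your argument is correct. The first part matches the paper verbatim; the second part reaches the same conclusion by a slightly different organizational route.

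The paper \emph{defines} $\hat S$ as the supremum, in the lattice $H/\mathcal N$, of the set $\mathcal S=\{S'\in H/\mathcal N\mid S'\cap S\neq\emptyset\}$, and then constructs a witness $\hat s\in\hat S\cap S$ by summing one representative from each component in $\mathcal S$. You instead invoke Proposition~\ref{dcfg} to pick finitely many generators $h_1,\dots,h_m$ of $S$, set $s=h_1+\dots+h_m$, and \emph{define} $\hat S=[s]_{\mathcal N}$; the supremum property is then verified by hand via the inequalities $a\leq_H\mu s$ and $k(a+b)\geq_H s$, $(\mu+l)s\geq_H a+b$. The underlying idea---sum enough elements of $S$ to land in the top component---is identical; the difference is that the paper presupposes the lattice structure on $H/\mathcal N$ and uses it to get the direction $S'\subseteq S\Rightarrow S'\leq\hat S$ for free, whereas your construction is self-contained and derives that direction explicitly. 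Your version is marginally more elementary for this reason; the paper's is marginally shorter.
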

\begin{proof}
Let $s\in S$, we prove now that if $S'$ is the Archimedean component of $H$ that
contains $s$, then $S'\subset S$. If $s'\in S'$, since $s\mathcal N s'$, there exist
$k,l\in\N\setminus\{0\}$ such that $k s'\geq_H s$ and $ l s\geq_H s'$. From
$ l s\geq_H s'$, we obtain that there exists $h\in H$ such that $l s=s'+h$.
Since $s\in S$, the element $l s$ is also in $S$, and thus $l s=s'+h\in S$. Using
now that $S$ is divisor-closed, we deduce that $s'\in S$ and therefore
$S'\subseteq S$.
So, every Archimedean component verifying that $S\cap S'\neq \emptyset$ verifies that
$S'\subset S$.
Since $H$ is the union of its Archimedean components,
$S = \cup\{S'\in H/\mathcal N\mid S'\cap S\neq \emptyset \}$.

Consider $\hat S$  the supreme  of the elements of
$\mathcal S=\{S'\in H/\mathcal N\mid S'\cap S\neq \emptyset \}$
in the lattice $H/\mathcal N$.
Since $H/\mathcal N$ is finite, $\mathcal S$ is also finite.
Consider for every element in $\mathcal S$  one of its elements,
make the sum of all of them  and call
the result $\hat s$.
Since the Archimedean components that intersect with $S$ are contained in $S$, the element
$\hat s$ belongs to $S$. Furthermore, using that $\hat S$ is the supreme
of the elements of $\mathcal S$,
the element $\hat s$ belongs to $\hat S$. Thus, $\hat S\subset S$.
We also have that for every Archimedean component $S'$ such that $S'\leq \hat S$, if
$s'\in S'$, then $s'+\hat s\in \hat S\subset S$. Hence, $s'\in S$ and therefore $S'\subset S$.
So, an Archimedean component $S'$ intersects with $S$ if and only if $S'\leq \hat S$, and therefore
$S=\cup \{S'| S' \textrm{ is an Archimedean component of }H\textrm{ and }S'\leq \hat S\}$.
\end{proof}

A consequence of Theorem \ref{componentes} is that the set of
divisor-closed submonoids of $H$ can be represented as a finite lattice with respect to the inclusion ordering.

\begin{example}
We consider again the monoid $H=\langle (3,0),(0,3),(2,2)\rangle \subseteq \N^2$.
Its Archimedean components
%of $H=\langle (3,0),(0,3),(2,2)\rangle \subseteq \N^2$ 
are
$\{(0,0)\}$, $\{(3x,0)|x>1\}$, $\{(0,3x)|x>1\}$ and $H$. Thus,  the set of divisor-closed
submonoids of $H$ is 
$\D(H)=\{\{(0,0)\},\{(3x,0)|x>1\}\cup\{(0,0)\}, \{(0,3x)|x>1\}\cup\{(0,0)\},H\}$.
\end{example}

%Another consequence of
From  Theorem \ref{componentes}, we also obtain  that
the only divisor-closed submonoids of an Archimedean monoid $H$ are $\{0\}$ and $H$.
In particular,
numerical semigroups (submonoids $S$ of $\N$ such that $\N\setminus S$ is finite) are always Archimedean, and, therefore, they do not have any non-trivial divisor-closed submonoids.

There exists an algorithm to compute the Archimedean components of a finitely
generated monoid (see \cite[\S 13]{Rosales3}) from a given presentation (see also \cite[\S 5]{Rosales3} for more information on presentations
of monoids).
Using Theorem \ref{componentes}, the computation of the divisor-closed submonoids of a finitely generated monoid can be obtained algorithmically.
%So, the computation of the
%Archimedean components and Theorem \ref{componentes} allow us to
%obtain all the divisor-closed submonoids of a finitely generated monoid.

\section{Divisor-closed submonoids of affine semigroups}\label{sec4}

In this section, we focus our attention on  affine semigroups for giving a different
method
to obtain the set of divisor-closed submonoids of a given 
%finitely generated
affine monoid.
The main advantage of our approach is that
we provide a geometrical
description of the set of divisor-closed submonoids in terms of the faces of
the polyhedral cone generated by the monoid.

Denote by $\Q_+$ the set of non-negative rational numbers and
$e_i$ the $i$th element of the canonical basis of $\Q^n$. The dot product of two elements $x, y\in \Q^n$ is written as $x\cdot y$ and is equal to $\sum_{i=1}^n x_i y_i$.

We say that $\mathbf C$ is a polyhedral cone if there exist $d_1,\dots,d_t\in\Q^n$ such that
$\mathbf C=\{\sum_{i=1}^t \lambda_i d_i| \lambda_1,\dots,\lambda_t \in\Q_+\}$
or, equivalently, if there exist
$f_1,\dots,f_t\in\Q^n$ such that
$\mathbf C=\{x\in\Q^n | f_1^T\cdot x\leq 0,\dots,f_t^T\cdot x\leq 0\}$
(see \cite[\S 7.2]{schrijver}).
In this case, the set $\{d_1,\dots,d_t\}$ is known as a system of generators of the polyhedral
cone $\mathbf C$.
Every
vector $\w\in\R^n$ determines a face  of a polyhedral cone $\mathbf{C}$ as follows:
\[ {\rm face}_{\w}(\mathbf{C})= \{ x\in\mathbf{C}| x.\w\geq y.\w
\textrm{ for all } y\in\mathbf{C} \}.\]
In other words,  ${\rm face}_{\w}(\mathbf{C})$ is the set of points for which
the function $f_\w(x)=x.\w$ is maximum, we denote this maximum by $\delta_\w$
(see \cite[\S 8.3]{schrijver}).

For
every $\w\in\R^n$, since the origin belongs to $\mathbf{C}$,
we have $\delta_w\geq 0$.
Besides, if $x.\w>0$ for  $x\in \mathbf C$, then $2x\in\mathbf{C}$,
and therefore $(2x).\w=2(x.\w)$. This implies that
the function $f_\w(x)=x.\w$ has not maximum and therefore
${\rm face}_\w(\mathbf{C})=\emptyset$.
Hence, for every non-empty face $F={\rm face}_\w(\mathbf{C})$ of a polyhedral cone $\mathbf{C}$,
$\delta_\w=0$ and the origin belongs to $F$.
In particular, we have that every polyhedral cone has exactly one
vertex, the origin.

Every face $F={\rm face}_\w(\mathbf{C})$ of a polyhedral cone $\mathbf C$ is
determined by the inequalities of $\mathbf C$ and the inequalities $x.\w\leq 0$ and $-x.\w\leq 0$.
Thus, every  face $F$  of a polyhedral cone $\mathbf C$ is again polyhedral cone.
Furthermore, if $\{ d_1,\dots, d_r\}$ are the elements of
$\{ d_1,\dots, d_t\}$ fulfilling that $d_i\cdot w=0$, the face $F$ is equal to
$\{\sum_{i=1}^r \lambda_i d_i| \lambda_1,\dots,\lambda_r \in\Q_+\}$.
So, $\mathbf C$ has at most $2^t$ faces.
From \cite[page 10]{Sturmfels}, we have that if $F'$ is a face of $F$, then
$F'$ is also a face of $\mathbf C$ (transitivity of the relation \emph{is a face of}).

We denote the vectorial subspace spanned by a polyhedral cone $\mathbf C$ by $V_{\mathbf C}$.
The dimension of $V_{\mathbf C}$
is known as the  dimension of $\mathbf C$. Similarly,
we define the  dimension of a face of a polyhedral cone; we refer to the $d$-dimensional
faces of $\mathbf C$ as its $d$-faces. In a $n$-dimensional polyhedral cone,
its $(n-1)$-faces are known as its facets and its $1$-faces as its
extremal rays; the origin is the only $0$-face.
From \cite[\S 8.8]{schrijver} we have that every polyhedral cone
$\mathbf C$ is generated by its extremal rays.
Denote by $\mathfrak F (\mathbf C)$ the set of faces of $\mathbf C$.
It follows from \cite[\S 8.6]{schrijver} and \cite[page 30]{brondsted} that the partially ordered set
$(\mathfrak F (\mathbf C),\subset)$  is a complete finite lattice with the operations
\[
\begin{array}{c}
\inf (\mathfrak A) = \cap \{F \in \mathfrak F (\mathbf C)|F\in\mathfrak A\},\\
\sup (\mathfrak A)=\cap \{G \in \mathfrak F (\mathbf C)|
\forall  F\in\mathfrak A \textrm{ : } F\subset G\}
\end{array}
\]
for every $\mathfrak A\subset \mathfrak F(\mathbf C)$.

\begin{definition}
Let $H$ be an affine semigroup of $\N^n$. Define the rational cone of $H$ as
${\rm L}_{\Q_+}(H)=\{\sum_{i=1}^r\lambda_ih_i\mid r\in\N,h_i\in H,\lambda_i\in\Q_+\}$.
The set ${\rm L}_{\Q_+}(H)\cap \N^n$ is denote by $\mathcal C_H$.
\end{definition}

The set $\mathcal C_H$ is a submonoid of $\Q_+^n$ and $H\subseteq \mathcal C_H$.
If $\{g_1,\dots,g_p\}\subset \N^n$ is a system of generators of $H$, then
${\rm L}_{\Q_+}(H)=\{\sum_{i=1}^p\lambda_ig_i\in\Q^n\mid \lambda_i\in\Q_+\}$.
Since
the elements $g_1,\dots,g_p$ are rational, by \cite[Corollary 7.1a]{schrijver}
there exists a matrix $A\in\mathcal M_{m \times n}(\Q)$ with $m\in \N$ such that
${\rm L}_{\Q_+}(H)=\{x\in\Q^n\mid Ax \leq 0 \}$. So, the set $\mathcal C_H$ is equal to
$\{x\in\N^n\mid Ax \leq 0 \}$. By \cite[Theorem 16.4]{schrijver}, it is a
submonoid of $\N^n$ finitely generated by
${\rm Minimals_\leq}\{x\in\N^n\mid Ax \leq 0 \}$ with
$\leq$ the product ordering on $\N^n$. Hence, $\mathcal C_H$ is an affine semigroup of $\N^n$.

We proof now the first relation between the faces of a ${\rm L}_{\Q_+}(H)$ and the divisor-closed submonoids of $H$.

\begin{lemma}\label{lemaS}
Let $H\subset \N^n$ be an affine semigroup and
$F$ be a face of ${\rm L}_{\Q_+}(H)$. The semigroup $S=F\cap H$ is an affine divisor-closed submonoid of $H$.
\end{lemma}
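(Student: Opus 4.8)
The plan is to verify the two requirements separately: that $S = F \cap H$ is a submonoid of $H$, and that it is divisor-closed; the affineness is then immediate from Proposition~\ref{dcfg}.

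First I would check that $S$ is a submonoid. Since $F$ is a face of the polyhedral cone ${\rm L}_{\Q_+}(H)$, it is itself a polyhedral cone, hence closed under addition and contains the origin; and $H$ is a submonoid of $\N^n$. Therefore $S = F \cap H$ contains $0$ and is closed under the addition of $\N^n$, so it is a submonoid of $H$.

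Next, and this is the crux, I would show $S$ is divisor-closed in $H$: if $a \in S$, $b \in H$, and $b$ divides $a$ in $H$ (equivalently $a = b + c$ with $b, c \in H$), then $b \in S$. Write $F = {\rm face}_\w({\rm L}_{\Q_+}(H))$ for some $\w \in \R^n$. As observed in the excerpt, since $F$ is non-empty the maximum value $\delta_\w$ of $f_\w(x) = x\cdot\w$ on the cone equals $0$, so $F = \{x \in {\rm L}_{\Q_+}(H) \mid x\cdot\w = 0\}$ and every point of the cone satisfies $x\cdot\w \le 0$. Now $b$ and $c$ both lie in $H \subseteq {\rm L}_{\Q_+}(H)$, so $b\cdot\w \le 0$ and $c\cdot\w \le 0$. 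On the other hand $a = b + c \in S \subseteq F$ gives $b\cdot\w + c\cdot\w = a\cdot\w = 0$. Two non-positive numbers summing to zero must both be zero, so $b\cdot\w = 0$; combined with $b \in {\rm L}_{\Q_+}(H)$ this yields $b \in F$, and since $b \in H$ as well, $b \in S$. Hence $S$ is divisor-closed.

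Finally, $S$ is a divisor-closed submonoid of the finitely generated monoid $H$, so by Proposition~\ref{dcfg} it is finitely generated, and being a submonoid of $\N^n$ it is an affine semigroup; this completes the proof. The main obstacle is purely bookkeeping around the face description: one must use the fact, already established in the excerpt, that a non-empty face of this cone is exactly the zero set of the functional $f_\w$ on the cone (so that the separating inequality is an equality precisely on $F$), which is what makes the ``two non-positive terms summing to zero'' argument go through.
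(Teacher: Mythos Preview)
Your proof is correct and follows essentially the same route as the paper: both use the description $F=\{x\in {\rm L}_{\Q_+}(H)\mid x\cdot\w=0\}$ with $x\cdot\w\le 0$ on the whole cone, and then the ``two non-positive numbers summing to zero'' observation to force both summands into $F$. The only cosmetic difference is that for affineness the paper observes directly that $G\cap F$ generates $S$ (where $G$ is a generating set of $H$), whereas you invoke Proposition~\ref{dcfg}; these amount to the same thing.
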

\begin{proof}
Since  $F$ is a face of ${\rm L}_{\Q_+}(H)$, there exists $\w\in\R^n$ such that
$F={\rm face}_\w({\rm L}_{\Q_+}(H))=\{x\in {\rm L}_{\Q_+}(H) | x.\w=0 \}$ and
${\rm L}_{\Q_+}(H)\setminus F=\{x\in {\rm L}_{\Q_+}(H)| x.\w<0\}$.
Assume that there exist $a,b\in H$ such that $a+b\in S$.
Since $H\subset {\rm L}_{\Q_+}(H)$, $a.\w\leq 0$ and $b.\w\leq 0$. Furthermore,
since $S\subset F$, $(a+b).\w=0$. Using that $(a+b).\w=a.\w+b.\w$, we obtain that
$a.\w=b.\w=0$ which implies that $a,b\in F$ and therefore $a,b\in S$.
Furthermore if $G$ is a system of generators of $H$, a system of generators of $S$ is the set $G\cap F$. Thus, $S$ is a finitely generated and therefore it is an affine divisor-closed submonoid of $H$.
\end{proof}

\begin{example}
Let  $H=\langle (1,0),(1,2),(1,3),(1,7) \rangle$.
We check that $S=\langle(1,7)\rangle$ is a divisor-closed submonoid of $H$.
Assume $(\lambda,7\lambda)=a+b$, $a,b\in H$ and $\lambda \in \N$.
Then, $(\lambda,7\lambda)=
(a_1+b_1,0)+(a_2+b_2,2a_2+2b_2)+(a_3+b_3,3a_3+3b_3)+(a_4+b_4,7a_4+7b_4)$ with $a_i,b_i\in\N$. Therefore,
$$\left\{\begin{array}{rcl}
\lambda & = & a_1+b_1+a_2+b_2+a_3+b_3+a_4+b_4,\\
7\lambda & = & 2a_2+2b_2+3a_3+3b_3+7a_4+7b_4.
\end{array}\right.$$
Hence, $7(a_1+b_1+a_2+b_2+a_3+b_3+a_4+b_4)=2a_2+2b_2+3a_3+3b_3+7a_4+7b_4$ and
$a_i,b_i$ are zero for $i\leq 3$ and therefore $S$ is a divisor-closed submonoid of $H$.

It is not hard to prove that ${\rm L}_{\Q_+}(H)$ has four faces: $\{(0,0)\}$, $H$,
the $1$-face generated by $(1,0)$ and the $1$-face generated by $(1,7)$.
Clearly $S=\{ \lambda (1,7)|\lambda\in\Q_+\}\cap H$ and by Lemma \ref{lemaS} the
submonoid $S=\langle(1,7)\rangle$ is a divisor-closed submonoid of $H$.

{\bf decir quienes son las demás caras}
\end{example}

The following is an immediate consequence of the above lemma.

\begin{corollary}\label{corCono}
Let $\mathbf C$ be a polyhedral cone. Then, for every face $F$ of $\mathbf C$
the semigroup $F\cap \N^n$ is a divisor-closed submonoid of $\mathbf C\cap \N^n$.
\end{corollary}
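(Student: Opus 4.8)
The plan is to derive Corollary~\ref{corCono} directly from Lemma~\ref{lemaS} by recognizing $\mathbf C\cap\N^n$ as the rational-cone monoid of a suitable affine semigroup. First I would observe that a polyhedral cone $\mathbf C\subseteq\Q^n$ is, by definition, generated by finitely many rational vectors $d_1,\dots,d_t\in\Q^n$; after clearing denominators we may replace each $d_i$ by a positive integer multiple lying in $\N^n$ without changing $\mathbf C$, so we may assume $d_1,\dots,d_t\in\N^n$. Setting $H=\langle d_1,\dots,d_t\rangle$, this is an affine semigroup of $\N^n$, and by the discussion following the definition of the rational cone we have ${\rm L}_{\Q_+}(H)=\{\sum_{i=1}^t\lambda_i d_i\mid \lambda_i\in\Q_+\}=\mathbf C$. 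Consequently $\mathcal C_H={\rm L}_{\Q_+}(H)\cap\N^n=\mathbf C\cap\N^n$, which is itself an affine semigroup of $\N^n$.

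Next I would apply Lemma~\ref{lemaS} twice, or rather once to the right ambient semigroup. The subtlety is that Lemma~\ref{lemaS} speaks of faces of ${\rm L}_{\Q_+}(H)$ and of the submonoid $F\cap H$, whereas the corollary wants $F\cap\N^n$ as a submonoid of $\mathbf C\cap\N^n=\mathcal C_H$. So I would instead apply the lemma with $\mathcal C_H$ playing the role of "$H$": since $\mathcal C_H$ is an affine semigroup and ${\rm L}_{\Q_+}(\mathcal C_H)={\rm L}_{\Q_+}(H)=\mathbf C$ (because $H\subseteq\mathcal C_H\subseteq\mathbf C={\rm L}_{\Q_+}(H)$ forces all three rational cones to coincide), every face $F$ of $\mathbf C$ is a face of ${\rm L}_{\Q_+}(\mathcal C_H)$, and Lemma~\ref{lemaS} gives that $F\cap\mathcal C_H$ is a divisor-closed submonoid of $\mathcal C_H$. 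Finally, $F\subseteq\mathbf C$ implies $F\cap\mathcal C_H=F\cap(\mathbf C\cap\N^n)=F\cap\N^n$, which is exactly the claimed statement.

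The only genuine point requiring care — and the step I expect to be the mild obstacle — is the reduction to integer generators and the identification ${\rm L}_{\Q_+}(\mathcal C_H)=\mathbf C$. One must check that scaling each $d_i$ to an integer vector does not enlarge or shrink the cone (it does not, since $\Q_+$-combinations are unaffected by positive scalar reparametrization of the generators) and that $\mathcal C_H$, although typically strictly larger than $H$ as a monoid, generates the same rational cone; this follows because $H\subseteq\mathcal C_H\subseteq{\rm L}_{\Q_+}(H)$ gives ${\rm L}_{\Q_+}(H)\subseteq{\rm L}_{\Q_+}(\mathcal C_H)\subseteq{\rm L}_{\Q_+}({\rm L}_{\Q_+}(H))={\rm L}_{\Q_+}(H)$. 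Everything else is a direct substitution into Lemma~\ref{lemaS}, so the proof is short.
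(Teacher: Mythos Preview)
Your proposal is correct and matches the paper's intent: the paper gives no proof beyond calling the corollary ``an immediate consequence of the above lemma,'' and your argument---taking $H'=\mathcal C_H=\mathbf C\cap\N^n$, observing that this is itself an affine semigroup with ${\rm L}_{\Q_+}(H')=\mathbf C$, and then applying Lemma~\ref{lemaS} to $H'$---is precisely the natural way to make that immediacy rigorous. One small point: your scaling step ``replace each $d_i$ by a positive integer multiple lying in $\N^n$'' tacitly assumes $\mathbf C\subseteq\Q_+^n$; the corollary as stated does not impose this, but the surrounding section works entirely inside $\Q_+^n$, so the assumption is harmless in context (and in any case the $\omega$-argument from the proof of Lemma~\ref{lemaS} goes through verbatim for an arbitrary rational polyhedral cone, so the restriction is not essential).
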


We use the definitions and notation of \cite[Chapter 1.3]{brondsted} for relative interior
(${\rm ri}(\mathbf C)$) and
relative boundary (${\rm rb}(\mathbf C)$).
%\begin{definition}
Let $\mathbf C\subset \Q^n$ be a polyhedral cone.
%Denote the vectorial subspace generated by $\mathbf C$ by $V_{\mathbf C}$.
%We use the definitions and notation of \cite[Chapter 1.3]{brondsted} for relative interior (${\rm ri}(\mathbf C)$) and relative boundary (${\rm rb}(\mathbf C)$).
By the relative interior of $\mathbf C$ we mean the interior of $\mathbf C$ in $V_{\mathbf C}$ with the Euclidean topology;
denote it by ${\rm ri}(\mathbf C)$. Similarly, we define the relative boundary
of $\mathbf C$; denote it by ${\rm rb}(\mathbf C)$.
Since every polyhedral cone is a closed subset of $\R^n$, the
relative closure of $\mathbf C$ in $V_{\mathbf C}$
is simply the closure of $\mathbf C$ in $\Q^n$ with the Euclidean topology.
%\end{definition}

In order to prove Propostion \ref{carascono}, we prove the following three lemmas.

\begin{lemma}\label{boundary}
Let $\mathbf C\subset \Q_+^n$ be a polyhedral cone and consider
$\R^n$ with the Euclidean topology.
Then,  the relative boundary ${\rm rb} (\mathbf C)$ of $\mathbf C$  is equal to the
union of its facets.
\end{lemma}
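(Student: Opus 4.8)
The plan is to prove the two inclusions ${\rm rb}(\mathbf C)\subseteq \bigcup\{\textrm{facets of }\mathbf C\}$ and $\bigcup\{\textrm{facets of }\mathbf C\}\subseteq {\rm rb}(\mathbf C)$ separately, working inside the linear span $V_{\mathbf C}$ with its Euclidean topology so that $\dim \mathbf C = \dim V_{\mathbf C} =: d$ and $\mathbf C$ has nonempty interior in $V_{\mathbf C}$. I may assume, as established earlier in the excerpt, that $\mathbf C$ is cut out by finitely many homogeneous inequalities, say $\mathbf C=\{x\in V_{\mathbf C}\mid f_j\cdot x\leq 0,\ j=1,\dots,t\}$, where (after discarding redundant inequalities and those $f_j$ vanishing on all of $V_{\mathbf C}$) we may take the list to be the irredundant one, so that each hyperplane $\{f_j\cdot x=0\}\cap V_{\mathbf C}$ meets $\mathbf C$ in a facet and every facet arises this way. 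I will also use that a point of $\mathbf C$ lies in ${\rm ri}(\mathbf C)$ iff every inequality is strict at it, i.e. $f_j\cdot x<0$ for all $j$; this is a standard fact about the relative interior of a full-dimensional polyhedron in its span, and it is the technical crux on which everything hinges.

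For the inclusion ${\rm rb}(\mathbf C)\subseteq\bigcup(\textrm{facets})$: let $x\in{\rm rb}(\mathbf C)={\mathbf C}\setminus{\rm ri}(\mathbf C)$. By the characterization above, there is some index $j$ with $f_j\cdot x=0$ (it cannot be $>0$ since $x\in\mathbf C$). Then $x$ lies in the face $F_j:={\rm face}_{-f_j}(\mathbf C)=\mathbf C\cap\{f_j\cdot x=0\}$, which by the earlier discussion of faces is itself a polyhedral cone. Since the inequality $f_j\cdot x\leq 0$ is irredundant, $F_j$ is not all of $\mathbf C$, so $\dim F_j\leq d-1$; and since $F_j$ is obtained by adding a single independent equality, $\dim F_j=d-1$, i.e. $F_j$ is a facet. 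Hence $x$ lies in a facet.

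For the reverse inclusion: let $F$ be a facet of $\mathbf C$ and let $x\in F$. Then $F=\mathbf C\cap\{f_j\cdot x=0\}$ for the corresponding irredundant inequality, so $f_j\cdot x=0$, which already shows $x\notin{\rm ri}(\mathbf C)$ by the strictness characterization; hence $x\in\mathbf C\setminus{\rm ri}(\mathbf C)={\rm rb}(\mathbf C)$. The only subtlety is to be sure $F\subseteq\mathbf C$ (so that $x$ really is a boundary point and not an exterior point), which is immediate since a face is a subset of the cone. Combining the two inclusions gives ${\rm rb}(\mathbf C)=\bigcup\{\textrm{facets of }\mathbf C\}$.

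The main obstacle I anticipate is purely bookkeeping: making the passage to an irredundant inequality description rigorous and matching up "irredundant inequalities" with "facets" (dimension count $\dim F_j=d-1$), together with citing the correct fact that ${\rm ri}(\mathbf C)$ is exactly the locus where all defining inequalities are strict. Once those standard polyhedral facts are in hand — all available in the references \cite{schrijver} and \cite{brondsted} already cited — the argument is the short two-inclusion chase above, and no real computation is needed.
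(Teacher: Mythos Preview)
Your argument is correct. The two-inclusion chase works exactly as you outline, and the two standard facts you flag --- that ${\rm ri}(\mathbf C)$ is the locus where all irredundant defining inequalities are strict, and that irredundant inequalities are in bijection with the facets --- are precisely the content of the relevant sections of \cite{schrijver} and \cite{brondsted}.

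The only difference from the paper is that the paper does not prove this lemma at all: its entire proof is the single line ``See \cite[Theorem~4.3]{brondsted}.'' So you have supplied a direct argument where the authors simply cite the literature. What you gain is a self-contained exposition inside the paper's own framework (working in $V_{\mathbf C}$, using the face description already set up); what the citation buys is brevity and avoidance of the bookkeeping you yourself identify (passing to an irredundant system, matching dimension counts). Either is acceptable; your version is in fact the standard proof one finds when unwinding Br{\o}ndsted's Theorem~4.3 for the special case of a cone.
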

\begin{proof}
See \cite[Theorem 4.3]{brondsted}.
\end{proof}

\begin{lemma}\label{interior}
Let $\mathbf C\subset \Q_+^n$ be a polyhedral cone and let $S$ be a submonoid of
$\mathbf C\cap \N^n$.
If $S$ is not contained in any proper face of $\mathbf C$ then
$S\cap  {\rm ri}(\mathbf C)\neq \emptyset$.
\end{lemma}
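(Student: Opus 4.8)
The plan is to argue by contraposition: I will show that if $S \cap {\rm ri}(\mathbf C) = \emptyset$, then $S$ is contained in some proper face of $\mathbf C$. Since $S \subseteq \mathbf C \cap \N^n \subseteq \mathbf C = {\rm ri}(\mathbf C) \cup {\rm rb}(\mathbf C)$, the hypothesis $S \cap {\rm ri}(\mathbf C) = \emptyset$ forces $S \subseteq {\rm rb}(\mathbf C)$. By Lemma \ref{boundary}, ${\rm rb}(\mathbf C)$ is the union of the facets $F_1, \dots, F_m$ of $\mathbf C$ (finitely many, since $\mathbf C$ has at most $2^t$ faces). So every element of $S$ lies in at least one facet.

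The key step is then a covering argument showing that $S$ cannot be ``spread across'' several facets without lying entirely in one. Writing each facet as $F_j = {\rm face}_{\w_j}(\mathbf C) = \{x \in \mathbf C \mid x \cdot \w_j = 0\}$ with $x \cdot \w_j \le 0$ on all of $\mathbf C$, I will use the additive/monoid structure of $S$: suppose $a \in F_i$ and $b \in F_j$ with $a \notin F_j$. Then $a \cdot \w_j < 0$ and $b \cdot \w_j = 0$, so $(a+b)\cdot \w_j = a\cdot\w_j < 0$, hence $a + b \notin F_j$; since $a + b \in S \subseteq \bigcup_k F_k$, the element $a+b$ must lie in some other facet $F_k$. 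Iterating the sum, $na + b \notin F_j$ for all $n \ge 1$, and more generally one shows that a sufficiently generic sum $\sigma$ of one representative from each element of $S$ (or rather, a sum realizing all the ``strictly negative'' coordinates available) avoids every facet that does not contain all of $S$. The clean way to phrase this: let $J = \{\, j \mid S \subseteq F_j \,\}$. If $J = \emptyset$ we derive a contradiction; if $J \neq \emptyset$, then $\bigcap_{j \in J} F_j$ is a face of $\mathbf C$ (finite intersections of faces are faces, by the lattice structure of $\mathfrak F(\mathbf C)$), it contains $S$, and it is proper since each $F_j$ is proper — giving exactly the conclusion.

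To rule out $J = \emptyset$: if for every facet $F_j$ there is some $s_j \in S \setminus F_j$, i.e.\ $s_j \cdot \w_j < 0$, then set $s = \sum_{j=1}^m s_j \in S$. For each $j$, since $s_k \cdot \w_j \le 0$ for all $k$ and $s_j \cdot \w_j < 0$, we get $s \cdot \w_j \le s_j \cdot \w_j < 0$, so $s \notin F_j$ for any $j$; hence $s \notin {\rm rb}(\mathbf C)$, i.e.\ $s \in {\rm ri}(\mathbf C)$, contradicting $S \cap {\rm ri}(\mathbf C) = \emptyset$. This finishes the contrapositive.

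The main obstacle I anticipate is the topological input in Lemma \ref{boundary} — identifying the relative boundary with the union of facets — but that is quoted from \cite[Theorem 4.3]{brondsted}, so in the present proof it is a black box. The only genuine care needed here is the bookkeeping with the supporting vectors $\w_j$: one must make sure each facet is described by a single functional $\w_j$ that is nonpositive on all of $\mathbf C$, which is precisely the content of the earlier discussion (every non-empty face has $\delta_\w = 0$ and arises as ${\rm face}_\w(\mathbf C)$), so this is already available. No case analysis beyond ``$J$ empty vs.\ nonempty'' is required.
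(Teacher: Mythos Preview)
Your proof is correct. Both your argument and the paper's hinge on the same construction: form a finite sum $s=\sum_j s_j\in S$ of witnesses so that $s$ avoids every facet, and then invoke Lemma \ref{boundary} to conclude $s\in{\rm ri}(\mathbf C)$. The difference lies in how the witnesses are indexed. The paper works on the \emph{primal} side: writing $\mathbf C=\{\sum_i\lambda_i d_i\mid\lambda_i\in\Q_+\}$, it defines $R_S$ as the set of generator-indices that occur with positive coefficient in some element of $S$, picks one $s_j\in S$ for each $j\in R_S$, and shows that the hypothesis forces $R_S\not\subseteq R_F$ for every proper face $F$, so the sum $s=\sum_{j\in R_S}s_j$ lies in no proper face. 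You work on the \emph{dual} side: you list the facets via their supporting functionals $\w_j$, and (in the case $J=\emptyset$) pick one $s_j\in S$ with $s_j\cdot\w_j<0$ per facet and sum. Your version is a bit leaner, since it plugs directly into the statement of Lemma \ref{boundary} without the intermediate index-set bookkeeping; the paper's version has the small advantage of yielding an explicit expression $s=\sum_i\mu_i d_i$ with $\mu_i>0$ for all $i\in R_S$. The exploratory paragraph about $a+b$ and $na+b$ is not needed and can be deleted; the argument is exactly the $J=\emptyset$ versus $J\neq\emptyset$ split you give afterwards (and in the latter case any single $F_j$ with $j\in J$ already suffices, so the intersection is harmless overkill).
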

\begin{proof}
Since $\mathbf C$ is a polyhedral cone,
there exist $d_1,\dots,d_t\in \Q^n_+$ such that
$\mathbf C=\{\sum_{i=1}^t\lambda_i d_i\in\Q^n\mid \lambda_i\in\Q_+\}$.
For every face $F={\rm face}_\omega(\mathbf C)$ define $R_F=\{i\in\{1,\dots,t\}|\omega\cdot d_i=0\}$.
We have that if $\sum_{i=1}^t\lambda_i d_i\in F$ with $\lambda_i\in\Q_+$, then $\lambda_i=0$ for every $i\not \in R_F$.
Define %$R_S$ as the set
\[R_S=\left\{ j\in\{1,\dots,t\}|\textrm{ there exists } s\in S\textrm{ such that }
s=\sum_{i=1}^t\lambda_i d_i\textrm{ with }\lambda_i\in\Q_+\textrm{ and }\lambda_j> 0
\right\}.\]
For every $j\in R_S$ we take $s_j\in S$ such that
$s_j=\sum_{i=1}^t\lambda_i^{j} d_i$
with $\lambda_i^{j}\in\Q_+$ and $\lambda_j^{j}> 0$.
Since $S$ is a monoid,  $s=\sum_{j\in R_S}s_j$ belongs to $S$.
The element $s$ is equal to
$\sum_{j\in R_S}(\sum_{i=1}^t \lambda_i^{j} d_i)=
\sum_{i=1}^t(\sum_{j\in R_S} \lambda_i^{j}) d_i= \sum_{i=1}^t \mu_id_i$ with $\mu_i=(\sum_{j\in R_S} \lambda_i^{j})>0$ for every $i\in R_S$.
%If $i\in R_S$, then $\lambda_i^i>0$, and therefore $\sum_{j\in R_S} \lambda_i^{j}>0$.
%Since $\lambda_i^{j}\in \Q_+$ and  $\lambda_j^{j}> 0$ for every $j\in R_S$,
%$\sum_{j\in R_S} \lambda_i^{j}>0$ for every $i\in R_S$.
Since $S$ is not contained in any proper face of $\mathbf C$, we have that
$R_S\not\subseteq R_F$ for every face $F$ of $\mathbf C$.
Thus, $s$ does not belong to any
proper face of $\mathbf C$.
So, $s$ is an element of $\mathbf C\cap S$ that it is not in any facet of
$\mathbf C$. By Lemma \ref{boundary}, we obtain that $s\in {\rm ri}(\mathbf C)$.
\end{proof}

Given $a,b\in\Q^n_+$, denote by $\measuredangle ab$ the angle between them, that is,
the value $0\leq {\rm arccos} (\frac {a\cdot b}{\|a\|.\|b\|}) \leq \pi/2$.

\begin{lemma}\label{angulo}
Let $\mathbf C$ be a polyhedral cone  contained in $\Q^n_+$ and
$a\in {\rm ri}(\mathbf C)$.
Then, for every $b\in \mathbf C\setminus\{0\}$ we have that
$0\leq \measuredangle ab < \pi/2$.
\end{lemma}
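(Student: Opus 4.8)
The plan is to exploit the characterization of the relative interior of a polyhedral cone in terms of its facets (Lemma \ref{boundary}), together with the fact that a polyhedral cone is cut out by finitely many linear inequalities. Write $\mathbf C = \{x \in \Q^n \mid f_1^T\cdot x \leq 0, \dots, f_t^T\cdot x \leq 0\}$, chosen so that each facet of $\mathbf C$ is exactly the set where one of the inequalities becomes an equality. Since $a \in {\rm ri}(\mathbf C)$, by Lemma \ref{boundary} the point $a$ lies in no facet, so $f_j^T \cdot a < 0$ for every $j$ (the inequalities that are always tight on all of $\mathbf C$ define the linear span $V_{\mathbf C}$, and those are automatically equalities for both $a$ and $b$, so they play no role; I would first reduce to the case where $\mathbf C$ is full-dimensional, i.e. $V_{\mathbf C} = \Q^n$, by restricting attention to $V_{\mathbf C}$).

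The heart of the argument is to rule out $\measuredangle ab = \pi/2$, i.e. $a \cdot b = 0$, for $b \in \mathbf C \setminus \{0\}$. Suppose for contradiction that $a \cdot b = 0$. The idea is that moving slightly from $a$ in the direction $-b$ stays inside $\mathbf C$ for small enough steps (because each defining inequality is strict at $a$, it remains satisfied under a small perturbation), so $a - \varepsilon b \in \mathbf C$ for sufficiently small $\varepsilon > 0$. But then $\|a - \varepsilon b\|^2 = \|a\|^2 - 2\varepsilon (a\cdot b) + \varepsilon^2 \|b\|^2 = \|a\|^2 + \varepsilon^2\|b\|^2 > \|a\|^2$. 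On the other hand, since $\mathbf C \subseteq \Q_+^n$, every coordinate of $a - \varepsilon b$ is nonnegative while being at most the corresponding coordinate of $a$ (as $b$ has nonnegative coordinates), which forces $\|a - \varepsilon b\| \leq \|a\|$ — a contradiction. Hence $a \cdot b > 0$, and since by definition $\measuredangle ab = {\rm arccos}\big(\tfrac{a\cdot b}{\|a\|\,\|b\|}\big) \in [0,\pi/2]$, having $a\cdot b > 0$ gives $\measuredangle ab < \pi/2$. The inequality $0 \leq \measuredangle ab$ is immediate from the definition.

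I expect the main obstacle to be the bookkeeping around the defining inequalities: one must be careful that "lies in no facet" translates to "all the non-trivial defining inequalities are strict at $a$", which requires choosing an irredundant description of $\mathbf C$ (or, equivalently, invoking Lemma \ref{boundary} after passing to $V_{\mathbf C}$ so that ${\rm ri}(\mathbf C)$ is the ordinary interior there). A cleaner alternative that avoids explicit inequalities: use that ${\rm ri}(\mathbf C)$ is open in $V_{\mathbf C}$, note that $b \in \mathbf C \subseteq V_{\mathbf C}$, so $a - \varepsilon b \in V_{\mathbf C}$ and lies in a neighborhood of $a$ inside $V_{\mathbf C}$ for small $\varepsilon$; since $\mathbf C$ is convex, this neighborhood point is in $\mathbf C$ provided it is, and the only thing to check is that staying in the "positive side" is automatic — here one really does need $\mathbf C \subseteq \Q_+^n$ for the norm comparison, which is exactly the hypothesis given. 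Either route, the contradiction via the norm comparison $\|a\|^2 < \|a - \varepsilon b\|^2 \leq \|a\|^2$ is the decisive step.
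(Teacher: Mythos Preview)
Your proof is correct and follows essentially the same strategy as the paper: both argue that if $a\cdot b=0$ then the perturbed point $a-\varepsilon b$ (which lies in $\mathbf C$ for small $\varepsilon>0$ because $a\in{\rm ri}(\mathbf C)$ and $b\in V_{\mathbf C}$) is incompatible with $\mathbf C\subseteq\Q_+^n$. The only difference is cosmetic: the paper notes that $a\cdot b=0$ with $a,b\in\Q_+^n$ forces disjoint supports, so $a-\varepsilon b$ has an outright negative coordinate wherever $b_i>0$, whereas you reach the contradiction via the norm comparison $\|a-\varepsilon b\|^2=\|a\|^2+\varepsilon^2\|b\|^2>\|a\|^2$ versus the coordinatewise bound $0\le (a-\varepsilon b)_i\le a_i$.
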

\begin{proof}
Assume that $a=(a_1,\dots,a_n)\in{\rm ri}(\mathbf C)$ and
let $b=(b_1,\dots,b_n)$ be an element of $\mathbf C\setminus\{0\}$.
Since $\mathbf C\subset \Q_+^n$, the element
$a$ has open
neighbourhoods contained in $\Q^n_+$.
Since $a,b\in\Q^n_+$, its dot product verifies $a\cdot b\geq 0$ and therefore
$0\leq \measuredangle ab \leq \pi/2$.
The angle is $\pi/2$ if and only if $a\cdot b=0$. This occurs if and only if
the sets $\{i\in\{1,\dots,n\}\mid a_i\neq 0\}$ and
$\{i\in\{1,\dots,n\}\mid b_i\neq 0\}$ are disjoint.
Since $\mathbf C$ is convex, the segment $\overline {ab}$ is contained in
$\mathbf C$ and therefore the straight line containing $a,b$ is a subset of 
$V_{\mathbf C}$.
Using that
$\{i\in\{1,\dots,n\}\mid a_i\neq 0\}\cap
\{i\in\{1,\dots,n\}\mid b_i\neq 0\}$ is empty, every neighbourhood of $a$ in
$V_{\mathbf C}$ contains points in $\Q^n\setminus\Q^n_+$, that is,
points with at least a negative coordinate ($a+\alpha(a-b)$ with $\alpha>0$).
%We have that if $a\in {\rm ri }(\mathbf C)$, then there exists $\epsilon_0$ such that 
%for every $\epsilon< \epsilon_0$ 
%${\rm B}(a,\epsilon)\cap {\mathbf C}={\rm B}(a,\epsilon)\cap V_{\mathbf C}$.
This contradicts the fact that $a\in {\rm ri}(\mathbf C)$.
\end{proof}

We now prove  the reciprocal of Corollary \ref{corCono}.

\begin{proposition}\label{carascono}
Let $\mathbf C$ be a polyhedral cone contained in $\Q_+^n$.
If $S$ is a divisor-closed submonoid of $\mathbf C\cap \N^n$ then
there exists a face $F$ of $\mathbf C$ such that $S=F\cap \N^n$.
\end{proposition}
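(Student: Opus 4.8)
The plan is to prove Proposition~\ref{carascono} by taking the divisor-closed submonoid $S$ and locating the unique face of $\mathbf{C}$ whose relative interior meets $S$; this face will be the desired $F$. First I would invoke Proposition~\ref{dcfg} so that $S$ is finitely generated, say $S=\langle s_1,\dots,s_q\rangle$ with $s_i\in\mathbf{C}\cap\N^n$. Consider the lattice of faces $\mathfrak{F}(\mathbf{C})$ and let $F$ be the \emph{smallest} face of $\mathbf{C}$ containing $S$; this exists because $\mathfrak{F}(\mathbf{C})$ is a complete finite lattice (it is the infimum, i.e.\ intersection, of all faces containing $S$, and an intersection of faces is a face). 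Then $S\subseteq F\cap\N^n$, so the inclusion $S\subseteq F\cap\N^n$ is immediate and what remains is the reverse inclusion.

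For the reverse inclusion I would argue as follows. Since $F$ is a polyhedral cone (a face of $\mathbf{C}$ is again a polyhedral cone, and $F\subseteq\mathbf{C}\subseteq\Q_+^n$) and $S$ is a submonoid of $F\cap\N^n$ that is, by minimality of $F$, not contained in any proper face of $F$, Lemma~\ref{interior} applied to $F$ gives an element $s\in S$ with $s\in{\rm ri}(F)$. Now take any $x\in F\cap\N^n$, $x\neq 0$; I must show $x\in S$. The idea is to use that $s$ is interior to $F$: by Lemma~\ref{angulo} (applied inside the cone $F$, with $a=s\in{\rm ri}(F)$ and $b=x$) we get $s\cdot x>0$, and more usefully, because $s$ lies in the relative interior of $F$ there is a positive integer $N$ such that $Ns-x\in F$, hence $Ns-x\in\mathbf{C}$. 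One then clears denominators: since $Ns-x$ is a nonnegative rational combination of the extremal rays of $F$, and $s,x\in\N^n$, the vector $Ns-x\in F\cap(\Z^n)$ can be written, after multiplying by a suitable positive integer $k$, as $y:=kNs-kx\in F\cap\N^n\subseteq\mathbf{C}\cap\N^n$. Then $kNs = kx + y$ with $kNs\in S$ (as $s\in S$ and $S$ is a submonoid), so $S$ being divisor-closed in $\mathbf{C}\cap\N^n$ forces $kx\in S$; since $x$ divides $kx$ in $\mathbf{C}\cap\N^n$ and $S$ is divisor-closed, we conclude $x\in S$. Thus every generator of $F\cap\N^n$ lies in $S$, giving $F\cap\N^n\subseteq S$ and hence equality.

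The main obstacle I anticipate is the step asserting that for $s\in{\rm ri}(F)$ and $x\in F$ there is an integer $N$ with $Ns-x\in F$, together with the clearing-of-denominators argument that keeps everything in $\N^n$ while preserving divisibility in $\mathbf{C}\cap\N^n$. Being in the relative interior means $s$ has a neighbourhood in $V_F$ contained in $F$; writing $s$ and $x$ in terms of the extremal rays $d_1,\dots,d_r$ of $F$, $s=\sum\mu_i d_i$ with all $\mu_i>0$ and $x=\sum\nu_i d_i$ with $\nu_i\geq 0$, so $Ns-x=\sum(N\mu_i-\nu_i)d_i$ has nonnegative coefficients once $N$ is large, hence lies in $F$. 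The coefficients are rational, so multiplying by a common denominator $k$ lands $y=kNs-kx$ in $F\cap\N^n$; one must just check $k$ can be chosen so that $kNs$ and $kx$ are also integer vectors, which is automatic since $s,x\in\N^n$. I would also double-check that divisibility is used correctly: $kx\leq_{\mathbf{C}\cap\N^n} kNs$ because $kNs=kx+y$ with $y\in\mathbf{C}\cap\N^n$, and $x\leq_{\mathbf{C}\cap\N^n} kx$ trivially, so both applications of the divisor-closed property are legitimate. Finally I would remark that $F$ is then exactly the face with $S=F\cap\N^n$, matching Corollary~\ref{corCono} in the other direction, so that $\mathfrak{F}(\mathbf{C})$ and $\D(\mathbf{C}\cap\N^n)$ are in bijection.
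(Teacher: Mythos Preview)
Your proof is correct and follows the same overall scaffold as the paper: take $F$ to be the smallest face containing $S$, use Lemma~\ref{interior} to produce $s\in S\cap{\rm ri}(F)$, and then show $F\cap\N^n\subseteq S$ via divisor-closedness. Where you diverge is in the execution of this last step. The paper invokes Lemma~\ref{angulo} in earnest and carries out a planar geometric construction: given $a=s$ and $b=x$, it extends the segment $\overline{ab}$ past $b$ to an auxiliary point $c\in{\rm ri}(F)$, draws through $b$ a line parallel to the ray of $c$, intersects it with the ray of $a$ to produce a relation $\mu_1 a=b+\mu_2 c$ with $\mu_i\in\Q_+$, and only then clears denominators. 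Your argument bypasses all of this by using directly the absorbing property of the relative interior of a cone: with $s=\sum\mu_i d_i$, $\mu_i>0$, and $x=\sum\nu_i d_i$, $\nu_i\ge 0$, over the extremal rays of $F$, one has $Ns-x\in F$ for any sufficiently large integer $N$, whence $Ns=x+(Ns-x)$ with both summands in $\mathbf C\cap\N^n$ forces $x\in S$. This is shorter and renders Lemma~\ref{angulo} unnecessary for the proposition.

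Two minor remarks. First, your clearing-of-denominators step is superfluous: since $s,x\in\N^n$ and $N\in\N$ you already have $Ns-x\in\Z^n$, and $Ns-x\in F\subseteq\Q_+^n$ forces $Ns-x\in\N^n$, so $k=1$ works throughout. Second, the assertion that $s\in{\rm ri}(F)$ admits a representation with all coefficients $\mu_i>0$ is a standard fact about relative interiors of polytopes/cones (e.g.\ transport \cite[Theorem~5.6]{brondsted} to cones via a hyperplane section), but it is worth citing explicitly since it is the hinge of your argument.
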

\begin{proof}
By \cite[Corollary 7.1a]{schrijver}, there exist $d_1,\dots,d_t\in \Q^n_+$ such that
$\mathbf C=\{\sum_{i=1}^p\lambda_i d_i\in\Q^n\mid \lambda_i\in\Q_+\}$.
Let $F$ be the smallest face of $\mathbf C$ containing $S$ (the intersection of  the faces containing $S$). Clearly, $S\subset F$.
The set $F$ is again a cone and $S$ is not contained in any proper face of $F$.
By Lemma \ref{interior}, there
exists $a\in S\cap {\rm ri} (F)$. For every $b\in F\cap \N^n$ we have $a\cdot b\geq 0$.
By Lemma \ref{angulo},
this implies that the angle $\gamma$ between these elements verifies that
$0\leq \gamma< \pi/2$.
Consider now the straight line $r$ containing $a$ and $b$. The segment
$\overline{ab}$ is contained in $F$. Using that $a\in{\rm ri}(F)$ there exists
$c\in\Q^n_+\cap {\rm ri}(F)$ such that $\overline{a b}\subsetneq \overline{ac}$
(see Figure \ref{fig1})
and verifying that $0 < \measuredangle bc<\pi/2$.
Define $\tau_a$ and $\tau_c$ the rays containing $a$ and $c$, respectively, 
%$\tau_c$ the ray containing $c$ 
and $\Pi$ the vectorial plane containing $a$
and $b$. This plane $\Pi$ also contains $c$, so there exists $r_c$ a straight line
contained in $\Pi$ parallel to the ray $\tau_c$ such that $b\in r_c$. Since
$0 < \measuredangle bc<\pi/2$, the lines $r_c$ and $\tau_a$ intersect in a point.
We call this point $p$. Using that $a,b,c\in \Q^n_+$ we obtain that $p$ is also
in $\Q^n_+$, and thus $p\in F$.
For $p$ there exist $\mu_1,\mu_2\in \Q_+$ such that
$\mu_1 a=b+\mu_2c=p\in\Q^n_+$.
Taking $k\in \N$ equal to the least common multiple of the denominators of
$\mu_1$, $\mu_2$ and the coordinates of $p$, we obtain that
$k\mu_1,k\mu_2\in \N$ and $k \mu_1 a=k (b+\mu_2c)=k p\in\N^n$.
Since $a\in S$ and $k\mu_1\in\N$, $k \mu_1 a$ is also in $S$.
Since $k b\in\N^n$, $k\mu_2 c\in\Q^n_+$ and $kb+k\mu_2 c\in\N^n$,
the element $k \mu_2 c$ is also in $\N^n$.
Using that $S$ is a divisor-closed submonoid of $\mathbf C\cap \N^n$,
$k \mu_1 a\in S$,  $kb,k\mu_2c\in F\cap \N^n$ and
$k\mu_1 a=k b+k \mu_2c$, we deduce  $k b,k\mu_2 c\in S$,
and therefore $b\in S$. Hence $F\cap \N^p\subset S$ and thus $F=S$.
\end{proof}

\begin{figure}[h]
\begin{center}
\includegraphics[scale=.5]{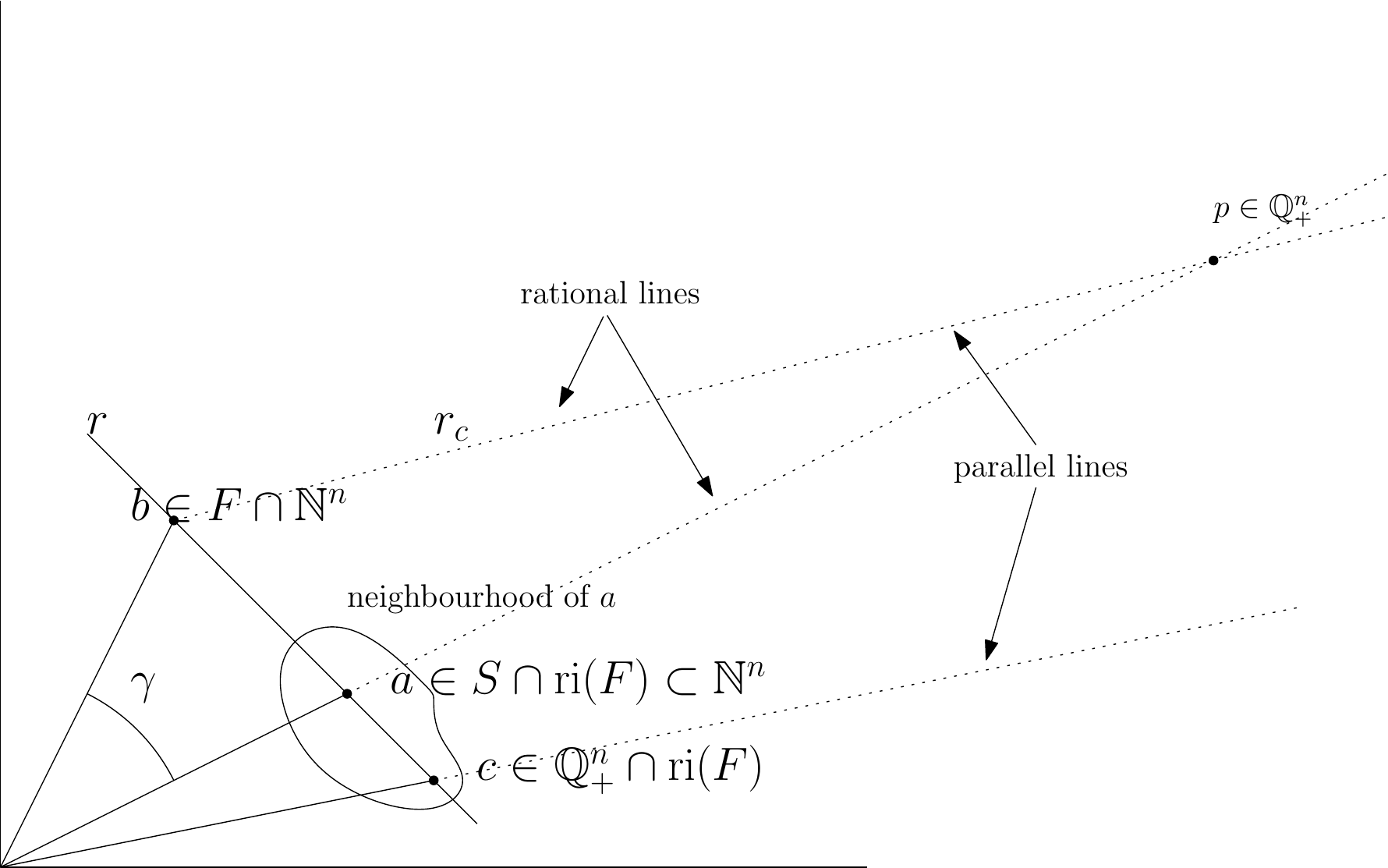}
\caption{}\label{fig1}
\end{center}
\end{figure}

Since
the faces of a polyhedral cone are again polyhedral cones and using
\cite[Theorem 16.4]{schrijver}, we obtain that
$\mathbf C\cap \N^n$ and $F\cap \N^n$ are finitely generated monoids.
Thus, every divisor-closed submonoid of $\mathbf C\cap \N^n$ is a
finitely generated monoid.

Let $\mathbf C\subset \Q_+^n$ be a polyhedral cone. The set
$\mathbf C\cap \N^n$ is an affine semigroup. From the above result and Theorem
\ref{componentes},
we deduce that there exists a bijective correspondence between the lattice of faces of
$\mathbf C$ and the lattice of Archimedean components of $\mathbf C\cap \N^n$.
Given a face $F$, its corresponding Archimedean component is
$(F\setminus \cup\{G\mid G\textrm{ is a face of }\mathbf C \textrm{ such that }
G\subsetneq F\})\cap \N^n$.

\begin{lemma}\label{multiplo}
Let $H\subset \N^n$ be an affine semigroup.
For every $c\in {\rm L}_{\Q_+}(H)$, there exists $m\in \N$ such that
$mc\in H$.
\end{lemma}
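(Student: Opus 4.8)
The plan is simply to clear denominators. By the very definition of ${\rm L}_{\Q_+}(H)$, the element $c$ can be written as $c=\sum_{i=1}^r\lambda_i h_i$ for some $r\in\N$, some $h_1,\dots,h_r\in H$, and some $\lambda_1,\dots,\lambda_r\in\Q_+$.

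Next I would choose $m\in\N\setminus\{0\}$ to be a common multiple of the denominators of $\lambda_1,\dots,\lambda_r$; for concreteness the least common multiple works, and if $r=0$ or all the $\lambda_i$ happen to be integers one takes $m=1$. With this choice $m\lambda_i\in\N$ for every $i$, and hence
\[ mc=\sum_{i=1}^r (m\lambda_i)\, h_i. \]
Since $H$ is a submonoid of $\N^n$ it is closed under addition, so every finite $\N$-linear combination of elements of $H$ again lies in $H$; applying this to the right-hand side yields $mc\in H$, as desired.

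There is essentially no obstacle in this argument: the only point worth remarking is that the multiplier $m$ can be taken strictly positive, which is immediate because a least common multiple of (nonzero) denominators is at least $1$. One may also note that the conclusion $mc\in H$ automatically forces $mc\in\N^n$, which is consistent with $c$ a priori only being known to lie in $\Q^n$.
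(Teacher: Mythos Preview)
Your proof is correct and is essentially the same as the paper's: both write $c$ as a $\Q_+$-combination of elements of $H$ (the paper uses a fixed finite generating set, you use arbitrary $h_i\in H$ from the definition) and then clear denominators by taking $m$ to be the least common multiple of the denominators of the coefficients. The only cosmetic difference is that the paper stops after defining $m$, whereas you spell out why $mc\in H$.
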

\begin{proof}
If $\{g_1,\dots,g_p\}\subset \N^n$ is a system of generators of $H$, then
${\rm L}_{\Q_+}(H)=\{\sum_{i=1}^p\lambda_ig_i\in\Q^n\mid \lambda_i\in\Q_+\}$.
Thus, if $c\in {\rm L}_{\Q_+}(H)$, then there exist $\lambda_1,\dots,\lambda_p\in\Q_+$ such that
$c=\sum_{i=1}^p\lambda_ig_i$. We take $m$ equal to the least common multiple
of the denominators of the elements $\lambda_i$.
\end{proof}

We are now ready to prove the reciprocal of Lemma \ref{lemaS}.

\begin{theorem}\label{carash}
Let $H\subset \N^n$ be an affine semigroup and let $S$ be a submonoid of $H$.
Then,
$S$ is a divisor-closed submonoid of $H$ if and only if
there exists a face $F$ of ${\rm L}_{\Q_+}(H)$ such that $S=F\cap H$.
\end{theorem}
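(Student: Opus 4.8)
The plan is to prove both implications of the biconditional separately, recycling the structural results already established. The forward direction — if $S = F \cap H$ for a face $F$ of ${\rm L}_{\Q_+}(H)$ then $S$ is divisor-closed in $H$ — is exactly Lemma \ref{lemaS}, so that half requires no new work; I would simply cite it. So the substance is the reverse implication: given a divisor-closed submonoid $S$ of $H$, produce a face $F$ of ${\rm L}_{\Q_+}(H)$ with $S = F \cap H$.

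For the reverse direction, the natural strategy is to pass through the larger affine semigroup $\mathcal{C}_H = {\rm L}_{\Q_+}(H) \cap \N^n$, where Proposition \ref{carascono} is available. The obstacle is that $S$ is divisor-closed in $H$, not in $\mathcal{C}_H$, and $\mathcal{C}_H$ is generally strictly larger than $H$, so I cannot apply Proposition \ref{carascono} to $S$ directly. The fix is to take $F$ to be the smallest face of ${\rm L}_{\Q_+}(H)$ containing $S$ (the intersection of all faces containing $S$, which is again a face by the lattice structure recorded before Definition, and is again a polyhedral cone). Then $S \subseteq F \cap H$ trivially, and I must prove the reverse inclusion $F \cap H \subseteq S$. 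To do this I would run an argument parallel to the one in Proposition \ref{carascono}: since $S$ lies in no proper face of $F$, Lemma \ref{interior} gives a point $a \in S \cap {\rm ri}(F)$. Given an arbitrary $b \in F \cap H$, Lemma \ref{angulo} forces $0 \le \measuredangle ab < \pi/2$, and the planar geometry in Proposition \ref{carascono} (the lines $\tau_a$ and $r_c$ meeting at a point $p \in \Q^n_+ \cap F$) produces $\mu_1, \mu_2 \in \Q_+$ with $\mu_1 a = b + \mu_2 c$ where $c \in {\rm ri}(F)$. Clearing denominators, $k\mu_1 a = kb + k\mu_2 c$ for suitable $k \in \N$. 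Here comes the one place where I must be more careful than in Proposition \ref{carascono}: I need $kb$ and $k\mu_2 c$ to lie in $H$ itself, not merely in $\N^n$, so that divisor-closedness of $S$ in $H$ applies. Since $b \in F \cap H \subseteq H$, the element $kb \in H$ is immediate; and since $c \in {\rm ri}(F) \subseteq {\rm L}_{\Q_+}(H)$, Lemma \ref{multiplo} gives an integer $m$ with $mc \in H$, so by enlarging $k$ to be divisible by $m$ we arrange $k\mu_2 c \in H$ as well (after also clearing the denominator of $\mu_2$). Then $k\mu_1 a \in S$ (as $a \in S$ and $k\mu_1 \in \N$), and $k\mu_1 a = kb + k\mu_2 c$ with both summands in $H$; divisor-closedness yields $kb \in S$, hence $b \in S$. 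Therefore $F \cap H \subseteq S$, giving $S = F \cap H$.

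I would then remark that this establishes the claimed correspondence: combining Theorem \ref{carash} with Lemma \ref{lemaS} shows the map $F \mapsto F \cap H$ is a surjection from faces of ${\rm L}_{\Q_+}(H)$ onto $\D(H)$, and injectivity together with the order-isomorphism with the lattice of Archimedean components follows from the preceding discussion (in particular the bijective correspondence between faces of a cone and Archimedean components of $\mathbf{C} \cap \N^n$ noted after Lemma \ref{multiplo}, plus Theorem \ref{componentes}).

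The main obstacle, as indicated, is the bookkeeping needed to keep all the auxiliary elements $kb$ and $k\mu_2 c$ inside $H$ rather than inside $\mathcal{C}_H$; this is precisely what Lemma \ref{multiplo} was set up to handle, so the proof should go through smoothly once $k$ is chosen to be a common multiple of the relevant denominators and of the integer $m$ supplied by Lemma \ref{multiplo}. A secondary point to verify cleanly is that the smallest face of ${\rm L}_{\Q_+}(H)$ containing $S$ is well-defined and that $S$ is not contained in any proper face of it — both of which follow from the lattice structure of $\mathfrak{F}({\rm L}_{\Q_+}(H))$ and minimality, exactly as in Proposition \ref{carascono}.
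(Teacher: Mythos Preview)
Your proposal is correct and follows essentially the same route as the paper's proof: one direction is Lemma~\ref{lemaS}, and for the converse you take the smallest face $F$ containing $S$, pick $a\in S\cap{\rm ri}(F)$ via Lemma~\ref{interior}, rerun the geometric argument of Proposition~\ref{carascono} to get a relation $k_1a=k_2b+k_3c$, and then invoke Lemma~\ref{multiplo} to force the $c$-term into $H$ so that divisor-closedness in $H$ applies. The only cosmetic difference is that the paper simply multiplies the whole relation through by the integer $m$ from Lemma~\ref{multiplo}, whereas you describe absorbing $m$ into $k$; both achieve the same thing.
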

\begin{proof}
Since $H$ is finitely generated, ${\rm L}_{\Q_+}(H)$ is a polyhedral cone
(it is generated as a cone by the generators of $H$).
Let $F$ be the smallest face of ${\rm L}_{\Q_+}(H)$ containing $S$.
We have $S\subseteq F\cap H$.

We prove now that $F\cap H\subseteq S$.
Take $b$ an element of $F\cap H$ and let us see that $b\in S$.
Using Lemma \ref{interior}, there exists
$a\in S\cap {\rm ri}(F)$.
Reasoning as in Proposition \ref{carascono},
there exist $k_1,k_2,k_3\in\N$ and $c\in F$ satisfying
$k_1 a=k_2 b+k_3 c$ and $k_1 a,k_2b, k_3c\in \N^n$.
Since $c\in F\subset {\rm L}_{\Q_+}(H)$,
by Lemma \ref{multiplo} there exists $m\in\N$ such that $mc\in H$. So,
we obtain that $mk_1 a=mk_2 b+mk_3 c\in S$, $mk_1 a\in S$ and
$mk_2 b,mk_3c \in H\cap F$.
Since $S$ is divisor-closed we obtain that $mk_2b\in S$.
Now, using that $mk_2\in\N$ and again that $S$ is divisor-closed we have $b\in S$.
Hence, $F\cap H\subset S$ and therefore
$S=F\cap H$.
\end{proof}

\begin{remark}
By Lemma \ref{dcfg},
every divisor-closed submonoid of an affine semigroup $H$ is finitely generated.
Note that
if $\{g_1,\dots,g_p\}\subset \N^n$ is a system of generators of $H$ and $F$ is a face of ${\rm L}_{\Q_+}(H)$, then
a system of generators of
the divisor-closed submonoid associated to $F$ 
 is the set
$\{g_1,\dots,g_p\}\cap F$.
\end{remark}

The results obtained so far for affine semigroups can be summarized as follows.

\begin{corollary}
Let $H$ be an affine semigroup of $\N^n$.
The lattice of divisor-closed submonoids of $H$, the lattice of Archimedean components of $H$ and
the lattice of faces of the polyhedral cone ${\rm L}_{\Q_+}(H)$ are isomorphic.
\end{corollary}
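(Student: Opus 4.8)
The plan is to produce two lattice isomorphisms and compose them, namely $\mathfrak F({\rm L}_{\Q_+}(H))\cong\D(H)$ and $\D(H)\cong H/\mathcal N$. Since $(\mathfrak F({\rm L}_{\Q_+}(H)),\subseteq)$ and $(H/\mathcal N,\leq)$ are finite lattices and, by the consequence of Theorem \ref{componentes}, $\D(H)$ is a finite lattice under inclusion, in each case it suffices to exhibit a bijection that both preserves and reflects the order: between finite posets such a map is automatically a lattice isomorphism.

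For the first isomorphism I would take $\Phi\colon\mathfrak F({\rm L}_{\Q_+}(H))\to\D(H)$, $F\mapsto F\cap H$, whose inverse sends $S$ to the smallest face of ${\rm L}_{\Q_+}(H)$ containing $S$. It is well defined by Lemma \ref{lemaS}, surjective by Theorem \ref{carash}, and visibly inclusion-preserving. Injectivity and the reflection of inclusions both follow from one observation using Lemma \ref{multiplo}: if $F_1\not\subseteq F_2$, choose $c\in F_1\setminus F_2$ and $m\in\N$ with $mc\in H$; then $mc\in F_1\cap H$ because $F_1$ is a cone, while $mc\notin F_2$, as otherwise $c=\frac{1}{m}(mc)\in F_2$. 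Hence $F_1\cap H\not\subseteq F_2\cap H$, so $\Phi$ is injective and $F_1\cap H\subseteq F_2\cap H$ forces $F_1\subseteq F_2$. This step is essentially bookkeeping on top of Theorem \ref{carash}, Lemma \ref{lemaS}, and Lemma \ref{multiplo}.

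For the second isomorphism I would use $S\mapsto\hat S$ from Theorem \ref{componentes}, where $\hat S$ is the Archimedean component with $S=\bigcup\{S'\in H/\mathcal N\mid S'\leq\hat S\}$. It is injective because $\hat S$ recovers $S$, order-preserving because $S_1\subseteq S_2$ makes $\hat S_1$ meet $S_2$ and hence $\hat S_1\leq\hat S_2$, and order-reflecting because $\leq$ is a partial order on $H/\mathcal N$. The point I expect to be the main obstacle is surjectivity, i.e. showing that every Archimedean component $T$ of $H$ equals $\hat S$ for some $S\in\D(H)$. I would take $S=F\cap H$, where $F$ is the smallest face of ${\rm L}_{\Q_+}(H)$ containing a representative $t\in T$ (so $t\in{\rm ri}(F)$), so that $S\in\D(H)$ by Lemma \ref{lemaS}; what remains is $\hat S=T$. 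This amounts to transporting the face--component correspondence recorded after Proposition \ref{carascono} from $\mathcal C_H={\rm L}_{\Q_+}(H)\cap\N^n$ to $H$: using Lemma \ref{multiplo} one checks that for $a,b\in H$ the relation $a\,\mathcal N\,b$ holds in $H$ if and only if it holds in $\mathcal C_H$ (rescale the witnesses into $H$, and use $x\,\mathcal N\,nx$), so that the Archimedean components of $H$ are exactly the sets ${\rm ri}(F)\cap H$ as $F$ ranges over the faces of ${\rm L}_{\Q_+}(H)$, ordered by inclusion of the corresponding faces; in particular $F\cap H=\bigcup\{S'\leq T\}$ and $\hat S=T$. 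Composing the two isomorphisms, and recalling that $\mathfrak F({\rm L}_{\Q_+}(H))\cong\D(H)$ was already order-compatible in both directions, yields that the three lattices are isomorphic.
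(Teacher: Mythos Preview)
The paper presents this corollary without proof, as a summary of Theorem \ref{componentes} together with the results of Section \ref{sec4}. Your argument correctly supplies the details the paper leaves implicit, using precisely the tools developed there (Theorem \ref{carash} and Lemma \ref{lemaS} for the bijection $F\mapsto F\cap H$, Lemma \ref{multiplo} for order-reflection and for transferring the face/Archimedean-component correspondence after Proposition \ref{carascono} from $\mathcal C_H$ to $H$), so it is essentially the intended approach.
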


An affine semigroup $H\subset \N^n$ is simplicial if the cone ${\rm L}_{\Q_+}(H)$ is
generated by $n$ linearly independent generators of $H$. These semigroups are used
to obtain examples of Cohen-Macaulay, Gorenstein and Buchsbaum rings (see \cite{RosalesCM},
\cite{GV_CM_Go}, \cite{OCPS} for further details).
We already know that these semigroups have at most $2^n$ divisor-closed submonoids.
We prove now that they have exactly this number.

\begin{corollary}
Let $H$ be a simplicial submonoid of $\N^n$.
The number of divisor-closed submonoids of $H$ is equal to $2^n$.
\end{corollary}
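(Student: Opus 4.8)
The plan is to count faces. By the preceding corollary, the lattice of divisor-closed submonoids of $H$ is isomorphic to the lattice $\mathfrak F({\rm L}_{\Q_+}(H))$ of faces of the polyhedral cone ${\rm L}_{\Q_+}(H)$, so it suffices to show that this cone has exactly $2^n$ faces. Since $H$ is simplicial, we may write ${\rm L}_{\Q_+}(H)=\mathbf C=\{\sum_{i=1}^n\lambda_i v_i\mid \lambda_i\in\Q_+\}$ with $v_1,\dots,v_n\in H$ linearly independent. For $A\subseteq\{1,\dots,n\}$ set $\mathbf C_A=\{\sum_{i\in A}\lambda_i v_i\mid\lambda_i\in\Q_+\}$. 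I will prove that $A\mapsto \mathbf C_A$ is a bijection from the power set of $\{1,\dots,n\}$ onto $\mathfrak F(\mathbf C)$, which immediately yields the claim.

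Surjectivity is exactly the observation made in the discussion preceding the definition of the rational cone: taking $\{v_1,\dots,v_n\}$ as the chosen system of generators of $\mathbf C$, any face ${\rm face}_\w(\mathbf C)$ is the cone generated by those $v_i$ with $v_i\cdot\w=0$, hence equals $\mathbf C_A$ for $A=\{i\mid v_i\cdot\w=0\}$; in particular $|\mathfrak F(\mathbf C)|\le 2^n$, recovering the bound already known. The step that requires the simplicial hypothesis is the converse, that every $\mathbf C_A$ really is a face. Because $v_1,\dots,v_n$ form a basis of $\Q^n$, there is a vector $\w$ with $v_i\cdot\w=0$ for $i\in A$ and $v_i\cdot\w=-1$ for $i\notin A$. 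Then for $x=\sum_{i=1}^n\lambda_i v_i\in\mathbf C$ we get $x\cdot\w=-\sum_{i\notin A}\lambda_i\le 0$, with equality if and only if $\lambda_i=0$ for all $i\notin A$, that is, if and only if $x\in\mathbf C_A$. Since $x\cdot\w\le 0$ on all of $\mathbf C$ and $0\cdot\w=0$, the maximum $\delta_\w$ equals $0$, so ${\rm face}_\w(\mathbf C)=\{x\in\mathbf C\mid x\cdot\w=0\}=\mathbf C_A$.

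Finally I would check injectivity: if $A\ne B$, say $i\in A\setminus B$, then $v_i\in\mathbf C_A$ but $v_i\notin\mathbf C_B$, since an equality $v_i=\sum_{j\in B}\lambda_j v_j$ would be a nontrivial linear relation among $v_1,\dots,v_n$ (coefficient $1$ on $v_i$, which is absent on the right-hand side), contradicting linear independence; hence $\mathbf C_A\ne\mathbf C_B$. Combining the three steps, $A\mapsto\mathbf C_A$ is a bijection onto $\mathfrak F(\mathbf C)$, so $\mathbf C$ has exactly $2^n$ faces and $H$ has exactly $2^n$ divisor-closed submonoids. The only subtle point is the realizability of each $\mathbf C_A$ as a face, which is false for general polyhedral cones and is precisely what the linear independence of the generators of a simplicial cone provides.
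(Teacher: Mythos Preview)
Your proof is correct and, like the paper's, reduces the claim to counting the faces of the simplicial cone ${\rm L}_{\Q_+}(H)$ via the preceding corollary. The execution, however, differs: you establish a direct bijection $A\mapsto\mathbf C_A$ between subsets of $\{1,\dots,n\}$ and faces, using a dual-basis vector $\w$ to certify that each $\mathbf C_A$ is a face and linear independence to get injectivity. The paper instead counts faces dimension by dimension: it first produces the $n$ facets via the hyperplanes $\langle g_1,\dots,\widehat{g_i},\dots,g_n\rangle$, then recurses on each facet (itself a simplicial cone) to obtain $\binom{n}{n-2}$ faces of dimension $n-2$, and so on, summing $\sum_{k=0}^n\binom{n}{k}=2^n$. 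Your argument is more self-contained and makes the role of linear independence sharper (the existence of the separating $\w$ is exactly where simpliciality enters), while the paper's argument emphasizes the recursive face structure; both rest on the same underlying fact that a simplicial cone has the face lattice of a simplex.
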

\begin{proof}
Let $H$ be a simplicial semigroup of $\N^n$, let
 $G=\{g_1,\dots,g_p\}$ be a system of generators of $H$.
%, and let  $\tau_1,\dots,\tau_n$ be the extremal rays of ${\rm L}_{\Q_+}(H)$.
Since $H$ is simplicial, the cone ${\rm L}_{\Q_+}(H)$ is generated by $n$ elements
of $G$. Assume that  a system of generators of ${\rm L}_{\Q_+}(H)$ is
$R=\{g_1,\dots, g_n\}\subset G$.
This implies that ${\rm L}_{\Q_+}(H)$ has $n$ bounding hyperplanes
$\mathcal H_i=\langle g_1,\dots,g_{i-1},g_{i+1},\dots, g_n\rangle $ for $1\leq i\leq n$.
Since every $\mathcal H_i$ contains a different facet of ${\rm L}_{\Q_+}(H)$
(just take $\omega$ or $-\omega$ equal to  the a normal vector of $\mathcal H_i$
and consider
${\rm face}_\omega({\rm L}_{\Q_+}(H))$ or
${\rm face}_{-\omega}({\rm L}_{\Q_+}(H))$),
the number of facets of
${\rm L}_{\Q_+}(H)$ is $\binom n {n-1}=n$.

For every facet $F$,
we consider now the $(n-1)$-dimensional subspace containing $F$.
We have a $(n-1)$-cone generated by $n-1$ elements.
So, its $(n-2)$-faces are generated by the subsets with cardinality equal
to  $n-2$ of the system of generators of $F$.
So, the $(n-2)$-faces of the $(n-1)$-faces of ${\rm L}_{\Q_+}(H)$
are generated by the subsets  with cardinality equal to $n-2$ of
$\{g_1,\dots,g_{i-1},g_{i+1},\dots, g_n\}$ for $i\in\{1,\dots,n\}$.
Since every $(n-2)$-face of ${\rm L}_{\Q_+}(H)$ is contained in a $(n-1)$-face of
${\rm L}_{\Q_+}(H)$ and by the transitivity of the relation \emph{is a face of}, we obtain
that the number of $(n-2)$-faces of ${\rm L}_{\Q_+}(H)$ is equal to $\binom n {n-2}$.

Repeating this process we obtain that the total number of faces of ${\rm L}_{\Q_+}(H)$ is
\[ \binom n {n}+\binom n {n-1}+\binom n {n-2}+\dots+\binom n {1}+\binom n {0}=2^n.\]

\end{proof}

\begin{example} Given $H=\langle (1,0),(1,2),(1,3),(1,7) \rangle$,
the cone ${\rm L}_{\Q_+}(H)$ is generated by $\{(1,0),(1,7)\}$.
The faces of ${\rm L}_{\Q_+}(H)$ are $\{(0,0)\}$, the cone $F_1$ generated by $(1,0)$,
the cone $F_2$ generated by $(1,7)$ and ${\rm L}_{\Q_+}(H)$.
Therefore, $\D(H)=\{\{(0,0)\}, H\cap F_1=\langle (1,0) \rangle,H\cap F_2=\langle (1,7)\rangle,H\}$.
Note that $H$ is simplicial and the number of divisor-closed submonoids obtained is $2^2$.

\end{example}

\begin{example}\label{ex-largo}
Let
$H$ be the affine semigroup generated by \[G=\{ (2,14,2),(5,6,1),(7,4,4),(9,3,5),(5,5,15),(6,9,12),(3,9,7),(10,1,3),(3,6,8)\}.\]
Using {\tt normaliz} (see \cite{normaliz}), we obtain that
the extremal rays of $H$ are $\{(1,1,3),(1,7,1),(5,6,1),(10,1,3)\}$. So, the cone
${\rm L}_{\Q_+}(H)$ has $10$ faces:
\[\begin{multlined}
\mathfrak F=\{\{(0,0,0)\},
\langle (1,1,3)\rangle,\langle (1,7,1)\rangle,\langle (5,6,1)\rangle,
\langle(10,1,3) \rangle,\\
\langle (1,7,1),(5,6,1)  \rangle,\langle(5,6,1),(10,1,3) \rangle,
\langle(10,1,3),(1,1,3) \rangle,\langle (1,1,3),(1,7,1)\rangle,
{\rm L}_{\Q_+}(H)\}\}.
\end{multlined}.\]
The equations of the $2$-faces are
\[\{-x-4y+29z=0,-17x+5y+55z=0,3y-z=0,10x-y-3z=0\}.\]
So, the set of divisor-closed submonoids of $H$ is
\[\begin{multlined}
\{\{(0,0,0\},\langle (5,5,15)\rangle,
\langle (2,14,2)\rangle,
\langle (5,6,1)\rangle,
\langle (10,1,3)\rangle,\\
\langle (2,14,2), (5,6,1) \rangle,
\langle (5,6,1), (10,1,3) \rangle,
\langle (5,5,15), (10,1,3) \rangle,\\
\langle (2,14,2), (5,5,15), (3,9,7), (3,6,8) \rangle,H\}.
\end{multlined}\]

\end{example}

\section{
%Projections of affine semigroups and divisor-closed submonoids. 
Divisor-closed submonoids of finitely generated cancellative monoids}

%Let $\widetilde H$ be a finitely generated cancellative monoid.
By using the results of above sections, we give a different approach for
computing the lattice of divisor-closed submonoids
of a finitely generated cancellative monoid $\widetilde H$.
For that purpose, we compute the lattice of divisor-closed submonoids of an affine semigroup $H$ associated to $\widetilde{H}$.
%Para ello usamos un el retículo de d-v-s de un semigrupo afín $H$ que se obtiene a partir de $\widetilde H$.
%In order to get this lattice,
%we use the divisor-closed submonoids of
%an associated affine semigroup $H$.
We show how the monoid $H$ is obtained.

Let $\widetilde H=\N^n/\sim_M$ be a reduced monoid with $M\leq \Z^p$ and
let
\[A=
\left(\begin{array}{ccc}
a_{11} & \dots & a_{1p}\\
\vdots &  & \vdots\\
a_{r1} & \dots & a_{rp}\\
a_{(r+1)1} & \dots & a_{(r+1)p}\\
\vdots &  & \vdots\\
a_{(r+k)1} & \dots & a_{(r+k)p}
\end{array}
\right)\in \mathcal M_{(r+k)\times p}(\Z),~
X=\left(
\begin{array}{c}
x_1 \\ \vdots \\ x_p
\end{array}
\right),~
\mathcal G=
\Z_{d_1} \times \dots \times \Z_{d_r} \times \Z^k
\]
such that the equations of $M$ are  $(A.X)^T=0\in \mathcal G$.
There exists $b=(b_1,\dots,b_p)\in\N^p$ such that $b_1\neq 0,\dots,b_p\neq 0$, and thus
we can add $b$ to all the homogeneous rows of $A$ obtaining an equivalent system of
equations. Since the elements of the $i$th row of $A$ with $1\leq i\leq r$ are in
$\Z_{d_i}$, we assume that all of them are in
$\{0,\dots,d_i-1\}$. So, the matrix $A$ has all its entries in $\N$.

From the columns of $A$ we obtain the elements
$\widetilde a_{*j}=([a_{1j}]_{d_1},\dots,[a_{rj}]_{d_r},a_{(r+1)j},\dots,a_{(r+k)j})\in \Z_{d_1}\times\dots\times\Z_{d_r}\times \Z^{k}$ and the elements
$a_{*j}=(a_{1j},\dots,a_{rj},a_{(r+1)j},\dots,a_{(r+k)j})\in \N^{r+k}$.
By \cite[Proposition 3.1]{Rosales3}, we have that $\N^n/\sim_M$ is isomorphic to the
submonoid 
%$\widetilde H$ 
of $\Z_{d_1}\times\dots\times\Z_{d_r}\times \Z^{k}$ generated by $\{\widetilde a_{*1},\dots, \widetilde a_{*n}\}$.
Define $ H$ as the submonoid of $\N^{r+k}$ generated by the set $\{ a_{*j}\mid 1\leq j\leq n\}$.

Since $\N^{r+k}$ is free, the map
\[\pi:\N^{r+k}\to \Z_{d_1}\times\dots\times\Z_{d_r}\times \N^{k}\]
is a monoid morphism (see \cite{morfismos}) verifying that
$\pi( a_{*j} )=\widetilde a_{*j}$ for all
$1\leq j\leq r+k$. Therefore $\pi( H)=\widetilde H$ and
\[\pi_{|  H}: H\to \widetilde H\]
is a monoid morphism.

Note that for every subset $J\subset \widetilde H$ we have
$\pi^{-1}_{| H}(J)=\pi^{-1}(J)\cap H$.
%\end{remark}
We see now  the relationship between the submonoids of $\tilde H$ and $H$.

\begin{lemma}
If $S$ is a submonoid of $\widetilde H$, then $\pi^{-1}(S)\cap H$ is a submonoid of
$ H$.
\end{lemma}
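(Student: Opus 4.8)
The plan is to verify directly that $\pi^{-1}(S)\cap H$ satisfies the two defining properties of a submonoid of $H$: it contains the identity element and it is closed under the addition of $H$. Throughout I will use that $\pi\colon \N^{r+k}\to \Z_{d_1}\times\dots\times\Z_{d_r}\times \N^k$ is a monoid morphism, that its restriction $\pi_{|H}\colon H\to\widetilde H$ is a monoid morphism, and the observation recorded just before the statement that $\pi^{-1}_{|H}(J)=\pi^{-1}(J)\cap H$ for every subset $J\subset\widetilde H$; thus $\pi^{-1}(S)\cap H=\pi^{-1}_{|H}(S)$ is simply the preimage of the submonoid $S$ under the morphism $\pi_{|H}$.

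First I would check that $0\in\pi^{-1}(S)\cap H$. Since $H$ is a submonoid of $\N^{r+k}$, its identity is $0\in\N^{r+k}$, and $0\in H$. Because $\pi_{|H}$ is a monoid morphism, $\pi_{|H}(0)=0$, the identity of $\widetilde H$, which lies in $S$ since $S$ is a submonoid of $\widetilde H$. Hence $0\in\pi^{-1}_{|H}(S)=\pi^{-1}(S)\cap H$, so the set is non-empty and contains the required identity element.

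Next I would verify closure under addition. Let $a,b\in\pi^{-1}(S)\cap H$. Then $a,b\in H$, and since $H$ is a submonoid of $\N^{r+k}$ we get $a+b\in H$. Moreover $\pi(a),\pi(b)\in S$, and by the morphism property $\pi(a+b)=\pi(a)+\pi(b)$, which lies in $S$ because $S$ is closed under addition. Therefore $a+b\in\pi^{-1}(S)$ and $a+b\in H$, so $a+b\in\pi^{-1}(S)\cap H$. Combining the two points, $\pi^{-1}(S)\cap H$ is a submonoid of $H$.

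There is essentially no serious obstacle here: the statement is the elementary fact that the preimage of a submonoid under a monoid morphism is a submonoid, applied to $\pi_{|H}$, and the only thing to be a little careful about is matching up the identity elements of $H$, $\widetilde H$ and the ambient free monoids, which is immediate from $\pi(0)=0$ and the fact that both $H$ and $S$ are genuine submonoids containing their respective zeros. One could also phrase the whole proof in one line by invoking that $\pi^{-1}(S)\cap H=\pi_{|H}^{-1}(S)$ and that preimages of submonoids under monoid morphisms are submonoids, but spelling out the two axiom checks as above keeps the argument self-contained.
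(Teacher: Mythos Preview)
Your proof is correct and follows essentially the same approach as the paper's: both verify closure under addition using the morphism property $\pi(a+b)=\pi(a)+\pi(b)$ together with $S$ being closed under addition. You are slightly more thorough in explicitly checking that $0\in\pi^{-1}(S)\cap H$, which the paper's proof omits, and you also add the useful conceptual remark that the statement is just the preimage of a submonoid under a monoid morphism.
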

\begin{proof}
Let  $s_1,s_2 \in \pi^{-1}(S)\cap H$, then $s_1,s_2\in H$ and
$\pi(s_1),\pi(s_2)\in S$. Therefore
$\pi(s_1)+\pi(s_2)\in S$. Since $\pi(s_1)+\pi(s_2)=\pi(s_1+s_2)$ and $s_1+s_2\in H$,
we obtain that $s_1+s_2\in \pi^{-1}(S)\cap H$.
\end{proof}

The following proposition characterizes divisor-closed submonoids $\widetilde H$ in terms of the divisor-closed submonoids of the affine semigroup $H$.

\begin{proposition}\label{propproy}
Let $\tilde S$ be a submonoid of $\widetilde H$. Then,
 $\tilde S$ is a divisor-closed submonoid of $\widetilde H$ if and only if
 $\pi^{-1}(\tilde S)\cap H$ is a divisor-closed submonoid of $ H$.
\end{proposition}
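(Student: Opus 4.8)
The plan is to prove both implications directly from the additive form of the definition of divisor-closed submonoid, i.e.\ that a submonoid $S$ of a monoid is divisor-closed when $b+c\in S$ forces $b,c\in S$, using throughout that $\pi_{|H}\colon H\to\widetilde H$ is a \emph{surjective} monoid morphism (this is the identity $\pi(H)=\widetilde H$ obtained when $H$ was constructed, since $\pi$ sends the generators $a_{*j}$ of $H$ onto the generators $\widetilde a_{*j}$ of $\widetilde H$). By the lemma immediately preceding the statement, $\pi^{-1}(\tilde S)\cap H$ is in any case a submonoid of $H$, so in each direction only divisor-closedness has to be checked.

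For the forward implication, assume $\tilde S$ is divisor-closed in $\widetilde H$ and take $a,b\in H$ with $a+b\in\pi^{-1}(\tilde S)\cap H$. Then $\pi(a)+\pi(b)=\pi(a+b)\in\tilde S$, and since $a,b\in H$ we have $\pi(a),\pi(b)\in\pi(H)=\widetilde H$; divisor-closedness of $\tilde S$ then yields $\pi(a),\pi(b)\in\tilde S$, hence $a,b\in\pi^{-1}(\tilde S)\cap H$. For the converse, set $T=\pi^{-1}(\tilde S)\cap H$, assume $T$ is divisor-closed in $H$, and take $\tilde a,\tilde b\in\widetilde H$ with $\tilde a+\tilde b\in\tilde S$. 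Using the surjectivity of $\pi_{|H}$, choose $a,b\in H$ with $\pi(a)=\tilde a$ and $\pi(b)=\tilde b$; then $\pi(a+b)=\tilde a+\tilde b\in\tilde S$, so $a+b\in T$. Divisor-closedness of $T$ gives $a,b\in T$, that is $\tilde a=\pi(a)\in\tilde S$ and $\tilde b=\pi(b)\in\tilde S$, so $\tilde S$ is divisor-closed in $\widetilde H$.

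The argument is thus a short diagram chase, and the only ingredient beyond unwinding definitions is the surjectivity $\pi(H)=\widetilde H$: this is precisely what allows a factorization $\tilde a+\tilde b$ witnessing a divisor in $\widetilde H$ to be lifted to a factorization $a+b$ in $H$, and conversely why images of factorizations in $H$ suffice to test divisor-closedness in $\widetilde H$. Hence I do not anticipate any real obstacle. The point of recording this proposition is that, combined with Theorem \ref{carash} applied to the affine semigroup $H$, it lets one compute $\D(\widetilde H)$ from $\D(H)$.
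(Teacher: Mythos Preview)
Your proof is correct and follows essentially the same argument as the paper's own proof: both directions are handled by the same short diagram chase, lifting or pushing factorizations along the surjective morphism $\pi_{|H}$. If anything, you are slightly more explicit than the paper in invoking surjectivity and the preceding lemma, but the content is identical.
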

\begin{proof}
Assume that $\tilde S$ is a divisor-closed submonoid of $\widetilde H$. If
$a,b\in  H$ and
$a+b\in \pi^{-1}(\tilde S)\cap H$, then $\pi(a+b)=\pi(a)+\pi(b)\in \tilde S$. Thus,
$\pi(a),\pi(b) \in \tilde S$, and therefore $a,b\in \pi^{-1}(\tilde S)\cap H$.

Conversely, assume that $\pi^{-1}(\tilde S)\cap H$ is a divisor-closed submonoid of
$ H$. If $a',b'\in\widetilde H$ and $a'+b'\in\tilde  S$, then there exist $a,b\in H$
such that $\pi(a)=a'$ and $\pi(b)=b'$. We have that $\pi(a)+\pi(b)=\pi(a+b)=a'+b'\in \tilde S$,
and therefore $a+b\in \pi^{-1}(\tilde S)\cap H$. Hence, $a,b\in\pi^{-1}(\tilde S)\cap H$
and thus $a',b'\in \tilde S$.

\end{proof}

\begin{corollary}\label{checkdc}
The set of divisor-closed submonoid of $\widetilde H$  is equal to
\begin{equation}\label{conj1}
%\mathfrak D(\widetilde H)=
\{\pi(S)\mid S \textrm{ is a divisor-closed submonoid of } H
\textrm{ and }(\pi^{-1}\circ\pi)(S)\cap H=S\}.
\end{equation}
\end{corollary}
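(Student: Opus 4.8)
The plan is to show the two inclusions between $\D(\widetilde H)$ and the set in~\eqref{conj1}, using Proposition~\ref{propproy} as the engine and a short surjectivity/closure argument to match things up. First I would observe that $\pi_{|H}\colon H\to\widetilde H$ is surjective, since $\widetilde H$ is generated by the images $\widetilde a_{*j}=\pi(a_{*j})$ of generators of $H$; hence every submonoid $\tilde S$ of $\widetilde H$ satisfies $\pi\bigl(\pi^{-1}(\tilde S)\cap H\bigr)=\tilde S$, because for any $a'\in\tilde S$ there is $a\in H$ with $\pi(a)=a'$, so $a\in\pi^{-1}(\tilde S)\cap H$ and $a'$ is in the image.

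For the inclusion $\D(\widetilde H)\subseteq\eqref{conj1}$: take $\tilde S\in\D(\widetilde H)$ and set $S=\pi^{-1}(\tilde S)\cap H$. By Proposition~\ref{propproy}, $S$ is a divisor-closed submonoid of $H$. We have $\pi(S)=\tilde S$ by the surjectivity remark, so it remains to check the condition $(\pi^{-1}\circ\pi)(S)\cap H=S$. The inclusion $S\subseteq(\pi^{-1}\circ\pi)(S)\cap H$ is trivial; conversely if $x\in H$ and $\pi(x)\in\pi(S)=\tilde S$, then $x\in\pi^{-1}(\tilde S)\cap H=S$. Thus $\tilde S=\pi(S)$ with $S$ meeting both requirements, so $\tilde S$ lies in the set~\eqref{conj1}.

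For the reverse inclusion: let $S$ be a divisor-closed submonoid of $H$ with $(\pi^{-1}\circ\pi)(S)\cap H=S$, and put $\tilde S=\pi(S)$. Since $\pi_{|H}$ is a surjective monoid morphism, $\tilde S$ is a submonoid of $\widetilde H$. The key point is to realize $S$ as $\pi^{-1}(\tilde S)\cap H$: indeed $\pi^{-1}(\tilde S)\cap H=\pi^{-1}(\pi(S))\cap H=(\pi^{-1}\circ\pi)(S)\cap H=S$ by hypothesis. Now Proposition~\ref{propproy} applies in the reverse direction: since $\pi^{-1}(\tilde S)\cap H=S$ is divisor-closed in $H$, the submonoid $\tilde S$ is divisor-closed in $\widetilde H$. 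Hence $\tilde S=\pi(S)\in\D(\widetilde H)$, completing the equality of the two sets.

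The only genuinely delicate point is the bookkeeping with the closure condition $(\pi^{-1}\circ\pi)(S)\cap H=S$: it is exactly what is needed to guarantee that the $S$'s parametrizing elements of~\eqref{conj1} are the same $S$'s that arise as $\pi^{-1}(\tilde S)\cap H$ from Proposition~\ref{propproy}, so that the correspondence $S\mapsto\pi(S)$ is well-behaved and the two descriptions coincide. Everything else is routine once surjectivity of $\pi_{|H}$ is noted; there is no hard combinatorial or geometric obstacle here, as the heavy lifting was already done in Proposition~\ref{propproy}.
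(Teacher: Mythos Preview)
Your proof is correct and follows essentially the same approach as the paper: both inclusions are obtained by invoking Proposition~\ref{propproy}, with the saturation condition $(\pi^{-1}\circ\pi)(S)\cap H=S$ being exactly what allows $S$ to be recovered as $\pi^{-1}(\tilde S)\cap H$. You simply spell out in detail (via the surjectivity of $\pi_{|H}$) the verifications that the paper leaves as ``clearly''.
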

\begin{proof}
By Proposition \ref{propproy},
since $(\pi^{-1}\circ\pi)(S)\cap H=S$ is a divisor-closed submonoid of
$ H$, the monoid $\pi(S)$ is a divisor-closed submonoid of $\widetilde H$.
Thus, every element of (\ref{conj1}) is a divisor-closed submonoid of $\widetilde H$.

Let now $\widetilde S$ be a divisor-closed submonoid of $\widetilde H$, and denote the submonoid $\pi^{-1}(\widetilde S)\cap H$ by $S$.
By Proposition \ref{propproy}, $S$ is
a divisor-closed submonoid of $H$ and, 
clearly, 
% that verifies that
%The submonoid
%$S=\pi^{-1}(\widetilde S)\cap H$
%is a divisor-closed submonoid of $H$ that verifies
$(\pi^{-1}\circ\pi)(S)\cap H= S$ and $\pi (S)=\tilde S$.

\end{proof}

We now illlustrate  the above results with two example where we compute the set divisor-closed submonoids.

\begin{example}\label{exproyeccion}
Let $\widetilde H=\N^4/\sim_M$ with 
$M=\langle (-5,-7,5,7),(12,1,-1,-12),(-5,0,0,5) \rangle $. Using the commands 
\begin{lstlisting}
genM=Matrix([[-5,-7,5,7],[12,1,-1,-12],[-5,0,0,5]])
generatorsToEquations(genM,[x1,x2,x3,x4])	
\end{lstlisting}
from the package \cite{ecuacionesZpython}, we obtain that the defining equations of $M$ are 
\[
\left\{ \begin{array}{rcl}
-2x_1-55x_2-79x_3 & \equiv & 0 ~{\rm mod}~ 10,\\
x_1+x_2+x_3+x_4 & = & 0. 
\end{array}\right.
\]
Observe that since the last equation has all its coefficients greater than zero, the monoid $\widetilde H$ has no units. Furthermore, the first equation is equivalent to $8x_1+5x_2+x_3\equiv 0 ~ {\rm mod}~ 10$, thus, 
\[
\left\{ \begin{array}{rcl}
8x_1+5x_2+x_3 & \equiv & 0~{\rm mod}~ 10,\\
x_1+x_2+x_3+x_4 & = & 0
\end{array}\right.
\] is also a set of defining equations of $M$.
From the comments at the beginning of this section, we obtain that $\widetilde H \cong \langle ([8],1),([5],1),([1],1),([0],1)\rangle \leq \mathbb Z_{10}\times \mathbb Z$ and $H$ is the affine submonoid of $\N^2$ generated by $\{ (8,1),(5,1),(1,1),(0,1)\}$.
Using Theorem \ref{carash}, the set of divisor-closed submonoids of $H$ are $\{\{0\}, S_1=\langle(0,1)\rangle,S_2=\langle (8,1)\rangle,H \}$.
Clearly $\pi(\{0\})=\{0\}$ and $\pi(H)=\widetilde H$ are divisor-closed submonoids of $\widetilde H$, it only remains to check if $\pi(S_1)$ and $\pi(S_2)$ are divisor-closed.
By Corollary \ref{checkdc}, $\pi(S_1)$ is divisor-closed if and only if $(\pi^{-1}\circ\pi)(S_1)\cap H=S_1$.
The monoid $S_1$ is equal to  $\{(0,n)\mid n\in\N\}$, but $(\pi^{-1}\circ\pi)((0,2))\cap H=\{(0,2),(10,2)\}$.  Therefore, $(\pi^{-1}\circ\pi)(S_1)\cap H\neq S_1$ and $S_1$ is not divisor-closed.
For $S_2$, we have that $S_2=\{(8n,n)\mid n\in \N\}$ and 
$(\pi^{-1}\circ\pi)((16,2))\cap H=\{(6,2),(16,2)\}$. Thus, $(\pi^{-1}\circ\pi)(S_2)\cap H\neq S_2$, and $S_2$ is not divisor-closed.
So,  $\D(\widetilde H)=\{\{0\},\widetilde H\}$.
\end{example}

\begin{example}\label{exx}
Another example is given by the monoid $\widetilde H=\N^4/\sim_M$ with $M$ the group 
%$\langle (-4,-4,4,4),(4,2,-4,-2),(1,2,-1,-2)\rangle$.
%$\langle (-4,-2,4,4),(4,1,-4,-2),(2,2,-2,-4)\rangle$.
$\langle (-4,-2,4,4),(5,2,-5,-4),(2,2,-2,-4)\rangle$.
Using the commands 
\begin{lstlisting}
genM=Matrix([[-4,-2,4,4],[5,2,-5,-4],[2,2,-2,-4]])
generatorsToEquations(genM,[x1,x2,x3,x4])
\end{lstlisting}
we obtain that the defining equations of $M$ are 
\[
\left\{
\begin{array}{rcl}
%x_2 & \equiv & 0~{\rm mod}~2,\\
%x_1 + x_3 & = & 0,\\
%x_2+x_4 & = & 0,
x_2 & \equiv & 0~{\rm mod}~2,\\
x_1 + x_3 & = & 0,\\
2x_2+x_4 & = & 0.
\end{array}
\right.
\]
So, $H$ is isomorphic to the affine semigroup 
%$\langle (0,1,0),(1,0,1),(0,0,1)\rangle$ 
$\langle (1,0,2),(0,1,0),(0,0,1)\rangle$ 
and $\widetilde H$ is isomorphic to the submonoid of $\Z_2\times \N^2$ generated by $\{ ([1],0,2),([0],1,0),([0],0,1)\}$.
The set of divisor-closed submonoids of $H$ are the intersections of $H$ with the faces of the cone
${\rm L}_{\Q_+}(H)$ (see Figure \ref{figuraEjemplo})
\begin{multline}
{\mathfrak D}(H)=\{\{0\},H,
S_{11}=\langle (1,0,2) \rangle, S_{12}=\langle (0,1,0) \rangle, S_{13}=\langle (0,0,1) \rangle, \\
S_{21}=\langle (1,0,2),(0,1,0) \rangle,
S_{22}=\langle (1,0,2),(0,0,1) \rangle,
S_{23}=\langle (0,1,0),(0,0,1) \rangle
\}.
\end{multline}

\begin{figure}[h]
	\begin{center} % {\bf \underline{Cambiar dibujo}}
		\includegraphics[scale=.3]{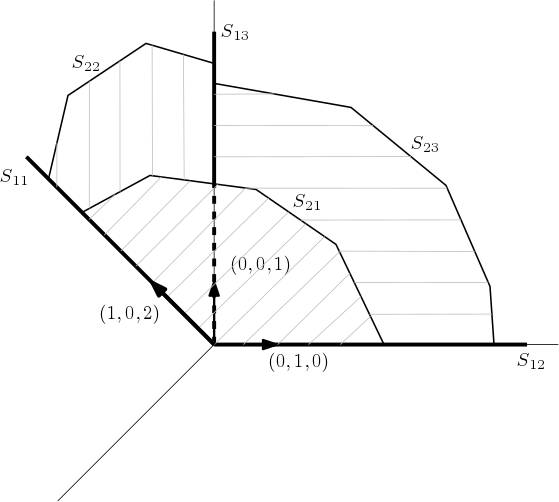}
		\caption{}\label{figuraEjemplo}
	\end{center}
\end{figure}

Note that for every $(x,y,z)\in H$, the set $(\pi^{-1}\circ \pi)((x,y,z))\cap H$ is a subset of $\{ (x,y,z+2x)\mid x,y,z\in \Z \}$.
For the subsemigroups $S_{11}$ and $S_{13}$ we consider the elements $(2,0,4)$ and $(0,0,4)$, respectively. 
We have that $\pi^{-1}(\pi((2,0,4)))\cap H=\pi^{-1}(\pi((0,0,4)))\cap H=\{ (0,0,4),(2,0,4)\}$. Since $(0,0,4)\not \in S_{11}$ and $(2,0,4)\not \in S_{13}$, neither $\pi(S_{11})$ nor $\pi(S_{13})$ are divisor-closed submonoids of $\widetilde H$.
A similar reasoning is used to prove that $\pi(S_{21})$ and $\pi(S_{23})$ are not divisor-closed.
The elements of $S_{12}$ are of the form $(0,n,0)$ and they verify that $(\pi^{-1}\circ \pi)((0,n,0))\cap H=\{(0,n,0)\}$, therefore, $\pi(S_{12})$ is a divisor-closed submonoid of $\widetilde H$.
For every element $(x,y,z)\in S_{22}$, we have now that $(\pi^{-1}\circ \pi)((x,y,z))\cap H\subset S_{22}$. So, the submonoid $\pi(S_{22})$ is a divisor-closed submonoid of $\widetilde H$, and 
$\D(\widetilde H)=\{\{0\},\widetilde H, \pi(S_{12}), \pi(S_{22})\}$.

\end{example}
	
%---
%Let $\widetilde H=\N^6/\sim_M$ with 
%\begin{multline}
%	M=\langle
%(-1,1,-1,1,0,0),(-52,-23,0,0,35,0),\\(-4275,-1890,-1,0,2877,1),(-7125,-3150,0,0,4725,0)
%\rangle
%\end{multline}

%Let $\widetilde H$ be the submonoid of $\Z_5\times \N^2$ generated by
%\[\widetilde G=\{
%(5,0,7),(10,0,7),(10,5,7),(5,5,7),(14,0,15),(7,5,7)
%\}\subset \Z_5\times \N^2.\]
%Define $H$ the submonoid of $\N^3$ generated by
%\[ G=\{
%(5,0,7),(10,0,7),(10,5,7),(5,5,7),(14,0,15),(7,5,7)
%\}\subset \N^3\] and
%$\pi:H\to \widetilde H$.
%The set of divisor-closed submonoids of $H$ is
%\[\begin{multlined}
%\{ \{0\},~ S_1=\langle (5,0,7)\rangle,
%S_2=\langle (10,0,7) \rangle, S_3=\langle (10,5,7) \rangle,
%S_4=\langle (5,5,7) \rangle,\\
%S_5=\langle (5,0,7),(10,0,7),(14,0,15) \rangle,
%S_6=\langle (10,0,7),(10,5,7) \rangle,\\
%S_7=\langle (10,5,7),(5,5,7),(7,5,7) \rangle,S_8=\langle (5,5,7),(5,0,7) \rangle, H
%\}.
%\end{multlined}\]
%The submonoid $S_1$ verifies that $(10,0,7)\in(\pi^{-1}\circ\pi)(S_1)$, but
%$(10,0,7)\not\in S_1$. This implies that $(\pi^{-1}\circ\pi)(S_1)\cap H\neq S_1$
%and therefore we do not use $S_1$ to obtain the divisor-closed submonoids of
%$\widetilde H$.
%Similarly, we obtain the same for $S_2$, $S_3$, $S_4$, $S_6$ and $S_8$.
%It is not hard to prove that $(\pi^{-1}\circ\pi)(S_5)\cap H=S_5$ and that
%$(\pi^{-1}\circ\pi)(S_7)\cap H=S_7$.
%Thus, the set of divisor-closed submonoids of $\widetilde H$ is
%\[
%\{ \{0\},
%\widetilde S_5=\langle (5,0,7),(10,0,7),(14,0,15) \rangle,
%\widetilde S_7=\langle (10,5,7),(5,5,7),(7,5,7) \rangle,
%\widetilde H\}
%.\]

\section{Computing the set of minimal distances}

The results obtained so far are now used to compute the set of minimal distances of a finitely generated cancellative monoid. Before describing this set, some definitions and notation are necessary.

%The definition of set of minimal distances of
%a monoid $H$ proceeding similarly as in  \cite{delta-set}.
Let $G=\{g_1,\dots,g_p\}$ be a system of generators of $H$, and assume that $H$ is isomorphic to $\N^p/\sim_M$ with $M$ a subgroup of $\Z^p$.  
Denote by ${\sf Z}(h)$ the set $\{(x_1,\dots,x_p)\in\N^p\mid \sum_{i=1}^px_ig_i=h\}$ for every $h\in H$.
We have that for  all $x,y\in\N^p$ and every $h\in H$, if the elements $x$, $y$ belong to ${\sf Z}(h)$ then  $x-y\in M$.
Define
the linear function $|\cdot|:\Q^p\to \Q$ with $|(x_1,\dots,x_p)|=\sum_{i=1}^px_i$.
%Given
%$h\in H$ and $H=\langle g_1,\dots,g_p\rangle$, 
The set of lengths of $h$ in $H$ is the set 
$$
\mathcal L(h)=\{ |(x_1,\dots,x_p)| : (x_1,\dots,x_p)\in{\sf Z}(h)
\}
.$$
This set is bounded if and only if $M\cap \N^p=\{0\}$, and if so,
there exist  some positive integers $l_1<\dots<l_k$ such that
$\mathcal L(h)=\{l_1,\dots, l_k\}$. 

\begin{definition}
The set
$
\Delta(h)=\{l_i-l_{i-1}: 2\leq i \leq k\}
$
is known as the Delta set of $h$, and 
%We globalize the notion of the Delta set by setting
%The set 
$
\Delta(H)=\bigcup_{h\in H} \Delta(h)
$
%The set $\Delta(H)$ 
is called the Delta set of $H$.
\end{definition}

It is straightforward to prove that
for every divisor-closed submonoid $S\subseteq H$, we have
$\Delta(S)\subseteq \Delta(H)$.
In \cite{Geroldinger-Qinghai}, we find the following definition.

\begin{definition}
Let $H$ be a Krull monoid, the set of
minimal distances of $H$ is defined as
$$
\Delta^*(H)=\{\min (\Delta(S))| S\subset H \textrm{ is a
divisor-closed submonoid with }\Delta(S)\neq \emptyset\}.
$$
\end{definition}

Clearly, we have $\Delta^*(H)\subset \Delta(H)$, and
$\Delta^*(H)=\emptyset$ if and only if $\Delta(H)=\emptyset$.
We would like to note that since
numerical semigroups do not have any non-trivial divisor-closed submonoids,
for every numerical semigroup $H$ the set 
$\Delta^*(H)$ is equal to $\{\min(\Delta(H))\}$.

Now, we see a method to compute $\min(\Delta(S))$.
% for a given finitely generated submonoid of $H$.
%In Algorithm \ref{alg1}, we use the following result to compute $\min(\Delta(S))$.
\begin{lemma}\label{lema:d}
Let $H=\langle h_1,\dots, h_p\rangle\cong\N^p/\sim_M$ be a monoid
%such that $M\cap \N^n=\{0\}$ {\bf (es necesario???)} and
with
$\{ m_1,\dots,m_{r}\}$ a system of
generators of $M$.
Then
\[\min (\Delta(H))=\min\{|m| : |m|>0,~m\in M\}=\gcd(|m_1|,\dots,|m_{r}|).\]
\end{lemma}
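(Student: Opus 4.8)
The plan is to establish the two claimed equalities separately, moving from the outermost expression toward the innermost. Recall that for $h \in H$ the set of factorizations ${\sf Z}(h)$ is a coset of $M \cap \N^p$ inside $\N^p$ (any two factorizations of $h$ differ by an element of $M$), and when $M \cap \N^p = \{0\}$ this intersection is trivial so $\mathcal L(h)$ is finite and $\Delta(h)$ is well defined.

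First I would prove $\min(\Delta(H)) \geq \min\{|m| : m \in M,\ |m| > 0\}$. Take any $h$ and write $\mathcal L(h) = \{l_1 < \dots < l_k\}$ with the difference $l_i - l_{i-1}$ realizing $\min(\Delta(H))$. Pick factorizations $x \in {\sf Z}(h)$ with $|x| = l_{i-1}$ and $y \in {\sf Z}(h)$ with $|y| = l_i$; then $m := y - x \in M$ and $|m| = l_i - l_{i-1} > 0$. Hence the left-hand side is at least the middle quantity. For the reverse inequality, I would take $m \in M$ with $|m| > 0$ minimal and split $m = m^+ - m^-$ into its positive and negative parts in $\N^p$; set $h$ to be the element of $H$ represented by $m^+$ (equivalently by $m^-$). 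Both $m^+$ and $m^-$ lie in ${\sf Z}(h)$, so $|m^+|, |m^-| \in \mathcal L(h)$ and $|m^+| - |m^-| = |m| > 0$. This does \emph{not} immediately give a consecutive difference in $\mathcal L(h)$, so the argument needs more care: I expect this is the main obstacle. The standard fix is to argue that $\min(\Delta(h))$ divides $|m|$ for \emph{every} $m \in M$ — indeed any two lengths of a common element differ by a sum of consecutive gaps, each a multiple of... no, that is not true either in general. The cleanest route is instead to show directly that $\gcd$ of all $|m|$ over $m \in M$ equals $\min(\Delta(H))$ and handle the minimum afterward.

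So I would reorganize: prove $\gcd\{|m| : m \in M\} = \gcd(|m_1|, \dots, |m_r|)$ first (immediate, since the $m_j$ generate $M$ as a group and $|\cdot|$ is linear, so $\{|m| : m \in M\} = \mathbb{Z}\gcd(|m_1|,\dots,|m_r|)$, and the positive elements of this set have minimum and gcd both equal to that generator — note $|m| > 0$ is achievable since some $|m_j|$ may be negative but we can negate). Call this common value $d$. Then $\min\{|m| : m \in M,\ |m| > 0\} = d$ is clear. It remains to show $\min(\Delta(H)) = d$. The inequality $\min(\Delta(H)) \geq d$ follows from the first paragraph's computation together with the fact that any positive $|m|$ is a multiple of $d$, so each consecutive difference $l_i - l_{i-1}$, being of the form $|m|$ for some $m \in M$, is a positive multiple of $d$, hence $\geq d$. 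For $\min(\Delta(H)) \leq d$: choose $m \in M$ with $|m| = d$, form $h$ from $m^+$ as above so that $|m^+|$ and $|m^-| = |m^+| - d$ are both in $\mathcal L(h)$; now take $l_{i-1}$ to be the largest length of $h$ that is $< |m^+|$ (this exists, as $|m^-| < |m^+|$) and $l_i = |m^+|$ — wait, $l_i$ need not equal $|m^+|$ unless $|m^+|$ is consecutive. Better: among all $h \in H$ and all consecutive pairs, some gap equals $d$; to force this, note every gap is a multiple of $d$, and if \emph{every} gap for \emph{every} $h$ were $\geq 2d$ then $l_i - l_{i-1} \geq 2d$ always, but $\mathcal L(h)$ for our chosen $h$ contains two values differing by exactly $d$ — contradiction, since consecutive gaps summing to $d$ cannot all be $\geq 2d$ unless there is only the one gap, which then equals $d$. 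This pins down $\min(\Delta(H)) = d$, completing the proof.

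Thus the overall structure is: (1) set $d = \gcd(|m_1|,\dots,|m_r|)$ and observe $\{|m| : m \in M\} = d\mathbb{Z}$, giving the rightmost equality and that $d = \min\{|m| : m \in M, |m| > 0\}$; (2) show every element of $\Delta(H)$ is a positive multiple of $d$, hence $\min(\Delta(H)) \geq d$; (3) exhibit an element $h$ (built from the positive/negative decomposition of an $m$ with $|m| = d$) two of whose lengths differ by exactly $d$, and deduce via the multiple-of-$d$ property that some consecutive gap of $h$ equals $d$, so $\min(\Delta(H)) \leq d$. The delicate point throughout is step (3): converting "two lengths differ by $d$" into "a \emph{consecutive} pair of lengths differs by $d$", which works precisely because all gaps are multiples of $d$ so a chain of them summing to $d$ must be a single gap.
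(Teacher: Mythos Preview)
Your proof is correct and follows the same strategy as the paper: show every element of $\Delta(H)$ has the form $|m|$ for some $m\in M$ with $|m|>0$ (giving $\geq$), exhibit an $h$ with two lengths differing by such an $|m|$ (giving $\leq$), and identify $\min\{|m|:|m|>0,\,m\in M\}$ with $\gcd(|m_1|,\dots,|m_r|)$ via linearity of $|\cdot|$. The one place you work harder than necessary is the ``$\leq$'' step: you invoke the divisibility-by-$d$ property to force a single consecutive gap of size exactly $d$, but the paper simply observes that if two lengths of $h$ differ by $|m|$ then the consecutive gaps between them are positive and sum to $|m|$, so $\min(\Delta(h))\leq |m|$ automatically---no appeal to your step~(2) is needed there.
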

\begin{proof}
%This proof is similar to the proof of \cite[Lemma 2]{delta-set}.
Since $\Delta(H)\subset \N$, there exists the minimum of $\Delta(H)$.
Let $m\in M$ such that $|m|>0$. There exists $x\in\N^p$ such that $x+m\in\N^p$.
The elements
$x,x+m\in\N^p$ are both in
${\sf Z}(h)$ with $h=[x]_{\sim_M}$, and therefore $|x|,|x+m|\in\mathcal L(h)$.
Since $|x+m|=|x|+|m|$, $\min(\Delta(H))\leq \min(\Delta(h))\leq |x+m|-|x|=|m|$.

Let $h\in H$ and $d\in\Delta (h)$. There exist two different elements
$x,y\in\N^p$
such that
$x,y\in {\sf Z}(h)\subset \N^p$ such that $||x|-|y||=d$. This implies that
$x-y\in M$. If $|x|-|y|>0$, we take $m=x-y$ otherwise $m=y-x$.
Clearly, $|x|=|y+m|$ and $d=|x|-|y|=|m|>0$. Thus
$\min (\Delta(H))\geq\min\{|m|: |m|>0,~m\in M\}$.

Finally,
\[\begin{multlined}
\min\{|m| : |m|>0,~m\in M\}
=\min\{|\sum_{i=1}^r\mu_im_i|:|\sum_{i=1}^r\mu_im_i|>0,~\mu_i\in \Z\}\\
=\min\{\sum_{i=1}^r\mu_i|m_i|\in\N\setminus\{0\}\mid \mu_i\in \Z\}
=\gcd(|m_1|,\dots,|m_{r}|).
\end{multlined}\]
\end{proof}

%Using the above result we obtain the following 
We now describe an 
algorithm for computing $\Delta^*(H)$.

\begin{algorithm}\label{alg1}
Input: $H\cong \N^p/\sim_M$. Output: $\Delta^*(H)$.
\begin{enumerate}
\item Compute the lattice $\mathfrak D (H)$ of divisor-closed submonoids of $H$.
\item For every $S\in \mathfrak D (H)$, if
$\{[e_{i_1}]_{\sim_M},\dots,[e_{i_t}]_{\sim_M}\}$ is
a system of generators of  $S$,
compute a system of generators $G_S$ of the group obtained from the
intersection of $M$ with
$\{(x_1,\dots,x_p)\in\Z^p\mid x_i=0\textrm{ for all }i\not\in\{i_1,\dots,i_t\} \}$.
\item For every $S\in \mathfrak D (H)$, compute
$|G_S|=\{\sum_{i=1}^p|m_i| : (m_1,\dots,m_p)\in G_S\}$ and $\d_S=\gcd(|G_S|)$.
\item Return $\{d_S\mid S\in \mathfrak D (H) \}$.
\end{enumerate}
\end{algorithm}

We now illustrate the above algorithm with two examples.

\begin{example}
Let $H$ be the affine semigroup generated minimally by
$\{(5, 9, 0), (10, 11, 0), (15, 5, 0), (0, 0, 1), (10, 0, 1)\}$.
The monoid $H$ is isomorphic to $\N^5/\sim_M$ with $M$ the subgroup of $\Z^5$ with
defining equations
\[
\left(
\begin{array}{ccccc}
 5 & 10 & 15 & 0 & 10 \\
 9 & 11 & 5 & 0 & 0 \\
 0 & 0 & 0 & 1 & 1 \\
\end{array}
\right)
.
\left(
\begin{array}{c}
x_1\\\vdots\\x_5
\end{array}
\right)=
\left(
\begin{array}{c}
0\\\vdots\\0
\end{array}
\right).
\]
A system of generators of this group is
$\{(-2,18,-36,-37,37), (-23,202,-403,-414,414)\}$ whose lengths are equal to $-20$ and $-224$.
Therefore $\min (\Delta(H))=\gcd(-20,224)=4$.

The cone ${\rm L}_{\Q_+}(H)$ has four $1$-faces, the rays generated by the elements $(5,9,0)$,
$(15,5,0)$, $(0,0,1)$ and $(10,0,1)$,  and four
$2$-faces, the cones generated by $\{(5,9,0),(15,5,0)\}$,
$\{(15,5,0),(10,0,1)\}$, $\{(10,0,1),(0,0,1)\}$ and $\{(0,0,1),(5,9,0)\}$.
Thus, the divisor-closed submonoids of $H$ are $S_1=\{(0,0,0)\}$,
$S_2=\langle (5,9,0) \rangle$, $S_3=\langle (15,5,0) \rangle$, $S_4=\langle (0,0,1) \rangle$,
$S_5=\langle (10,0,1) \rangle$,
$S_6=\langle (5,9,0),(10,11,0), (15,5,0) \rangle$, $S_7=\langle (15,5,0),(10,0,1)\rangle$,
$S_8=\langle (10,0,1),(0,0,1) \rangle$, $S_9=\langle (0,0,1),(5,9,0) \rangle$, and $H$.
%Let $S=\langle (5,9,0),(10,11,0), (15,5,0) \rangle$.
It is easy to prove that for every $S_i$ with $i\neq 6$, the group $G_{S_i}$ is trivial,
%Excepting for $S$, for all the other submonoids their group of the defining
%group of their congruence is $0$
and thus $\Delta(S_i)=\emptyset$.
Next, we compute a system of generators of $G_{S_6}$.
%the group associated to the congruence of $S_6$.
This group is defined by the equations
\[
\left(
\begin{array}{ccc}
 5 & 10 & 15 \\
 9 & 11 & 5 \\
 0 & 0 & 0 \\
\end{array}
\right)
.
\left(
\begin{array}{c}
x_1\\x_2\\x_3
\end{array}
\right)=
\left(
\begin{array}{c}
0\\0\\0
\end{array}
\right)
\]
and a system of generators of $G_{S_6}$ is
$\{(23,-22,7)\}$. Thus, $\min (\Delta(S_6))$ is equal to $23-22+7=8$, and
therefore $\Delta^*(H)=\{4,8\}$.
\end{example}

Note that defining equations of the submonoids of $H$ are formed by some of the columns of the defining equations of $H$. For this reason, the minimum of set $\Delta^*(H)$ is always equal to $\min(\Delta(H))$.

\begin{example}
Let $\widetilde H$ be as in Example \ref{exx}.
To compute $\Delta^*(\widetilde H)$, 
from every element of $\D(\widetilde H)=\{\{0\},\widetilde H, 
\pi(S_{12})=\langle ([0],1,0) \rangle,\pi(S_{22})= \langle ([1],0,2), ([0],0,1) \rangle \}$ we compute a system of generators of their associated groups.
Clearly, $\{0\}$ and $\pi(S_{12})$ are free monoids and 
for these semigroup we obtain that $\min(\Delta(\{0\}))=\min(\Delta(\pi(S_{11}))=\emptyset$.
A system of generators of the group of $\widetilde H$ is $\{(-4,-2,4,4),(5,2,-5,-4),(2,2,-2,-4)\}$, since $|(-4,-2,4,4)|=2$ and $|(5,2,-5,-4)|=|(2,2,-2,-4)|=-2$, $\min(\Delta(\widetilde H))=\gcd(2,-2)=2$. 
The group associated to $\pi(S_{22})$ has the following set of defining equations:
\[
\left\{
\begin{array}{rcl}
x_1 &\equiv& 0 ~{\rm mod}~ 2\\
2x_1+x_2 &=&0\\
\end{array}
\right.
.\]
A system of generators of the above group is $\{(2,-4)\}$ and 
so $\min(\Delta(\pi(S_{22})))=2$. 
Hence $\Delta^*(\widetilde H)=\{2\}$.
Observe that the minimum of $\Delta^*(\widetilde H)$ is equal to $\min(\Delta(\widetilde H))$.
\end{example}

\end{document}